 \theoremstyle{plain}
 \newtheorem{thm}{Theorem}[section]
 \newtheorem{lem}[thm]{Lemma}
 \newtheorem{cor}[thm]{Corollary}
 \numberwithin{equation}{section} 
 \numberwithin{figure}{section} 
 \newtheorem{prop}[thm]{Proposition}
 \theoremstyle{remark}
 \newtheorem{rmk}[thm]{Remark}
 \newtheorem{exa}[thm]{Example}
 \newtheorem*{acknowledgement*}{Acknowledgement}
 \theoremstyle{definition}
 \newtheorem{defn}[thm]{Definition}
\def\eps{\varepsilon}
\newcommand{\R}{\mathbb R}
\newcommand{\Rd}{\mathbb R^{d}}
\newcommand{\C}{\mathcal{C}}
\newcommand{\N}{\mathbb N}
\newcommand{\bV}{\overline V}
\newcommand{\bO}{\overline \Omega}
\newcommand{\pV}{\partial V}
\newcommand{\pO}{\partial \Omega}
\newcommand{\dlambda}{d_{\lambda}}
\newcommand{\Llambda}{L_{\lambda}}
\newcommand{\Smu}{S_{\mu}}
\newcommand{\dlambdaV}{d_{\lambda}^{V}}
\newcommand{\Winf} {W^{1,\infty}}
\newcommand{\A}{\mathcal{A}}
\def\esssup#1{\underset{#1}{\rm ess.sup}\,}
\newcommand{\Leb}{\mathcal{L}}
\newcommand{\path}{\mathsf{path}}
\begin{document}
	
\thispagestyle{plain}
\begin{center}
	\Large
	\textbf{A property of Absolute Minimizers in $L^\infty$ Calculus of Variations and of solutions of the Aronsson-Euler equation}
	
	\vspace{0.9cm}
	\large

	\textbf{Camilla Brizzi and Luigi De Pascale}
	
	\vspace{1.5cm}

	\textbf{Abstract} \\
\begin{flushleft}
	We discover a new minimality property of the absolute minimizers of supremal functionals, whose variational problems are also known as $L^\infty$ variational problems. In particular for every minimizer $v$ of the quasi-convex functional \[\esssup{\Omega}H(x,Dv(x))\] we consider the set \[\mathcal{A}(v)=\{x: H (x,Dv(x))=\esssup{\Omega}H(x,Dv(x))\}, \] suitably defined. If $u$ is an absolute minimizer we give a structure result for $\mathcal{A}(u)$ and we show that  then $\mathcal{A}(u)\subset\mathcal{A}(v)$ for every minimizer $v$.
\end{flushleft}
\small
\vspace{1.5cm}
\begin{flushleft}
{\bf Keywords:} Supremal functional, $L^\infty$ Calculus of Variations, Aronsson-Euler equation, $\infty$-harmonic functions.\\ 
\vspace{0.2cm}
\noindent{\bf Subjclass:}[2020]{49K30, 49K20, 49N60}
\end{flushleft}

\end{center}


\newpage
\title[The attainment set]{A property of Absolute Minimizers in $L^\infty$ Calculus of Variations and of solutions of the Aronsson-Euler equation}

\author{Camilla Brizzi}
\author{Luigi De Pascale}\address{{\bf C.B. \& L.D.P.} Dipartimento di Matematica ed Informatica,
Universit\'a di Firenze, Viale Morgagni, 67/a - 50134 Firenze, ITALY}

\begin{abstract} 
We discover a new minimality property of the absolute minimizers of supremal functionals (also known as $L^\infty$ Calculus of Variations problems). 
\end{abstract}

\keywords{Supremal functional, $L^\infty$ Calculus of Variations, Aronsson-Euler equation, $\infty$-harmonic functions}
\subjclass[2020]{49K30, 49K20, 49N60}
\date{\today}
\maketitle

\section{Introduction}

We consider a connected and bounded open set $\Omega$ in $\mathbb{R}^{d}$ and a functional $H:\Omega\times\R^{d}\to \overline{\R}$. The following is an $L^\infty$ problem in Calculus of Variations
\begin{equation}
\label{prob}
\min\left\{F(v,\Omega):=\esssup{x\in\Omega}H(x,Dv(x)):v\in g+W^{1,\infty}(\Omega)\cap \C_{0}(\Omega)\right\},
\end{equation}
where $F$ is called \textit{supremal functional}.\\
These problems have been around since the late '60s, are interesting for several applications and may be more general 
of the one above (for example, $H$ could depend also on $v$). 
\begin{defn}
An absolute minimizer for \eqref{prob} is a function $u\in W^{1,\infty}(\Omega)\cap\C(\bO)$ such that $u=g$ on $\pO$ and for all open subset $V\subset\subset \Omega$ one has 
\begin{equation*}
\esssup{x\in V} H(x,Du(x))\le\esssup{x\in V}H(x,Dv(x))_{}
\end{equation*}
for all $v$ in $W^{1,\infty}(V)\cap\C(\bV)$ such that $u=v$ on $\pV$.
\end{defn}
Absolute minimizers are also well known because they satisfy (in the sense of viscosity) the natural necessary condition for minimisation which is, in this case, 
known as Aronsson-Euler equation and reads as follows
$$ - \frac{d\ }{dx} H(x, Du(x))\cdot D_p H(x, Du(x)) =0.$$
The most well-known among the Aronsson-Euler equations is certainly the $\infty$-Laplacian equation 
$$-\Delta_\infty u= -\langle D^2 u Du, Du \rangle=0 $$
which is associated to the Hamiltonian $\frac{|Du|^2}{2}$.
 
The Aronsson-Euler equation was first discovered by Aronsson in \cite{Aro1965,Aro1966,Aro1967,Aro1968} as necessary condition for absolute minimality. Derivation and sufficiency are discussed in several papers, see, for example, 
\cite{AroCraJuu2004,crandall2009derivation,Cra2003,CraEvaGar2001,yu2006variational}. Existence, uniqueness and regularity of solutions had in the last two decades a tumultuous development. 
A partial view  is given by \cite{ CraEvaGar2001}. 
There is also a game theoretic, probabilistic, averaging point of view which has, more recently, gained popularity. We will not consider that approach here although it deserves attention for many aspects. The vectorial and higher order case are more difficult and for a starting point we may refer to \cite{BarJenWan2001, kat2020, kat2018}.

Throughout this work, we assume the following:
\begin{enumerate}[(A)]
	\item \label{propA} $H\ge0$, $H(\cdot,0)=0$ and $H(x,\cdot)$ is quasi-convex, i.e. any sublevel $\{H(x,\cdot)\le\lambda\}$ is convex;
	\item \label{propB}the map $(x,p)\mapsto H(x,p)$ is uniformly (with respect to $x$) coercive in $p$, which means 
	\begin{equation*}
	\forall \lambda \ \ \exists M\ge 0 \ \mbox{s.t.} \ \ \ H(x,p)\le\lambda \implies |p|\le M;
	\end{equation*}
	\item \label{propC} the map $(x,p)\mapsto H(x,p)$ is continuous in $\Omega\times\mathbb{R}^{d}$; 
	\item \label{propD}For all $\lambda>\mu\ge 0$ 
	there exists $\alpha>0$ such that 
	\begin{equation*}
	\forall x \in \Omega \ \ \left\{H(x,\cdot)\le\mu\right\}+B(0,\alpha)\subset\{H(x,\cdot)\le\lambda\};
	\end{equation*}
	\item \label{propE} For all $\beta>0$, $\lambda>0$ and $V\subset\subset\Omega$, there exists $\delta$ such that, 
\begin{equation*}
\left|\lambda-\overline{\lambda}\right|<\delta\implies\forall x\in V, \ \ \left\{H(x,\cdot)<\lambda\right\}\subset\left\{H(x,\cdot)<\overline{\lambda}\right\}+B(0,\beta).
\end{equation*}
\end{enumerate}
\begin{rmk}\label{remarkpropr}
\begin{enumerate}[(i)]
We observe that: 
\item the continuity of $H$, given by the property \eqref{propC}, allows for the local uniform continuity. This means that for every $x_{0}\in\Omega$ and for every $K,r>0$, there exists a non decreasing function $\omega:[0,\ +\infty)\to[0,\ +\infty) $, such that $ \lim_{t\to 0}\omega (t)=0$
\begin{equation*}
  |H(x,p)-H(y,p')|  \leq \omega(|x-y|+|p-p'|),  
\end{equation*}
for all $(x,p),(y,p')\in B(x_0,r) \times B(0, K)$;
\item by property \eqref{propD},  since $0 \in \{H(x, \cdot) \leq \lambda \}$, for any $\lambda>0$ there exists $\alpha>0$ such that 
\[ B(0,\alpha) \subset \{H(x,\cdot)\le \lambda \}, \ \text{for any }x\in \Omega;   \]
\item property \eqref{propE} implies that the interior part of the level set $\{H(x,p)=\lambda\}$ is empty for every $\lambda\ge 0$;
\end{enumerate}
\end{rmk}

\begin{exa}\label{localD}
The functional $H:\Omega\times\R\to\R$, defined by
\begin{equation*}
H(x,p):=\frac{|p|}{dist(x,\partial \Omega)},
\end{equation*}
satisfies all the properties \eqref{propA}-\eqref{propE}, but the property \eqref{propD}. However it satisfies a local version of such property, that is 
\begin{equation*}
    \forall \, V\subset\subset \Omega,	\forall x \in V \ \ \left\{H(x,\cdot)\le\mu\right\}+B(0,\alpha)\subset\{H(x,\cdot)\le\lambda\}.
\end{equation*}
\end{exa}

We will show that for any minimizer $v$ of problem \eqref{prob} above it is possible to give a point-wise definition of 
\[H(x,Du(x))\]
which will be denoted by $H(x,Dv)(x)$ to distinguish it from the classical value which is only almost everywhere defined.
For any minimizer we introduce the attainment set 
\begin{equation}\label{attain}
\A(v):= \{x \in \Omega \ | \ H(x, Dv)(x)= \esssup{\Omega} H(x,Dv(x))\}.
\end{equation}
and we will prove the following minimality property

\hfill

{\bf Theorem} Let $u \in \Winf(\Omega)\cap \C(\overline \Omega)$
be an absolute minimizer for problem \eqref{prob} then 
\begin{equation}
\A(u)\subset \A(v)
\end{equation}	
for all minimizers $v$ of \eqref{prob}

\hfill

We will also give a qualitative description of $\A(u)$.

\section{Level-convex duality and pseudo distances}
Absolute minimizers are well characterized in terms of a family of pseudo-distances associated to the  the quasi-convex conjugate of $H$.
 
\subsection{The quasi-convex conjugate of $H$} 
\begin{defn}
For any $x\in\Omega$ and $\lambda\ge 0$, we define $L_\lambda (x,\cdot):\mathbb{R}^{d}\to\mathbb{R}$ by
\begin{equation*}
L_{\lambda}(x,q):=\sup\{p\cdot q:\ p\in\mathbb{R}^d,\  H(x,p)\le \lambda \}.
\end{equation*}
\end{defn}
\begin{rmk}\label{propofL} \hfill 
\begin{enumerate}[(i)]
\item For any $\lambda\ge 0$, $L_{\lambda}$ is a Finsler metric (we refer to \cite{BaoCheShe2000} for more details about Finsler metrics). Indeed, $L_{\lambda}:\Omega\times\R ^d \to\R_{\ge 0}$ is a Borel-measurable function such that $L_{\lambda}(x,\cdot)$ is positively 1-homogeneous and convex for all $x\in\Omega$. 
The measurability of $L_{\lambda}$ is due to the upper semicontinuity w.r.t. $x$ (that follows from the continuity of $H$) and the convexity w.r.t $q$;
\item for any positive $\lambda$, the unitary ball of $L_\lambda (x, \cdot)$ is the polar set of the convex sublevel set $H(x,\cdot)\leq \lambda$ of $H$. 
Indeed by the assumptions \eqref{propA} and \eqref{propC} the set $\{H(x,\cdot)\le\lambda\}$ is convex and closed and then, thanks to Hahn-Banach Separation Theorem, we have the following characterization:
\begin{equation}
\label{charsublevels}
p \ \mbox{is such that} \ H(x,p)\le\lambda \iff \sup\{p\cdot q: L_{\lambda}(x,q)\le 1\}\le 1.
\end{equation}

\item from properties \eqref{propB} and \eqref{propD} we infer that for every $\lambda\ge 0$ there exists $0<\alpha< M$, such that 
\begin{equation}
\label{boundsLlambda}
\alpha|q|\le L_{\lambda}(x,q)\le M|q|, \quad \mbox{for every }x\in\Omega, \ \mbox{and }q\in\Rd.
\end{equation}
\end{enumerate}
\end{rmk}

\begin{lem}\label{lsc} The map $(x,q, \lambda) \mapsto L_\lambda (x,q)$ is lower semicontinuos. 
\end{lem}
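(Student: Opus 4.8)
The plan is to read $L_\lambda(x,q)$ as the support function of the sublevel set $K(x,\lambda):=\{p\in\Rd : H(x,p)\le\lambda\}$, i.e. $L_\lambda(x,q)=\sup_{p\in K(x,\lambda)} p\cdot q$, and to verify lower-semicontinuity sequentially. So I would fix $(x_n,q_n,\lambda_n)\to(x,q,\lambda)$ and try to show $\liminf_n L_{\lambda_n}(x_n,q_n)\ge L_\lambda(x,q)$. Everything reduces to one set-convergence fact: the set-valued map $(x,\lambda)\mapsto K(x,\lambda)$ is inner (Kuratowski lower) semicontinuous, meaning that every $p$ with $H(x,p)\le\lambda$ is the limit of a sequence $p_n$ with $H(x_n,p_n)\le\lambda_n$. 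Granting this, for $\eps>0$ I pick $p$ admissible at $(x,\lambda)$ with $p\cdot q> L_\lambda(x,q)-\eps$, take $p_n\to p$ admissible at $(x_n,\lambda_n)$, and estimate $L_{\lambda_n}(x_n,q_n)\ge p_n\cdot q_n\to p\cdot q> L_\lambda(x,q)-\eps$; letting $\eps\to 0$ closes the argument. Note coercivity (B) already guarantees $K(x,\lambda)\subset \overline{B}(0,M)$, so $L$ is finite-valued with $0\le L_\lambda(x,q)\le M|q|$.

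To prove inner-semicontinuity of $K$, I would split according to whether $p$ lies in the open sublevel set or on its boundary. If $H(x,p)<\lambda$, then continuity of $H$ (property (C)) gives $H(x_n,p)\to H(x,p)<\lambda=\lim_n\lambda_n$, hence $H(x_n,p)\le\lambda_n$ for all large $n$; choosing $p_n=p$ eventually (and any element of $K(x_n,\lambda_n)$, say $p_n=0$, for the finitely many exceptional indices) yields $p_n\to p$. If instead $H(x,p)=\lambda$ with $\lambda>0$, I would first push $p$ into the interior: by Remark (ii), which follows from (D), the point $0$ lies in the interior of $K(x,\lambda)$, so convexity of $K(x,\lambda)$ forces $\tilde p_t:=(1-t)p$ to satisfy $H(x,\tilde p_t)<\lambda$ for every $t\in(0,1)$, while $\tilde p_t\to p$ as $t\to 0^+$. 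Applying the strict case to each $\tilde p_{t_k}$ along $t_k=1/k\downarrow 0$ and diagonalizing (assign to each large $n$ the deepest interior point $\tilde p_{t_k}$ already admissible at $(x_n,\lambda_n)$) produces the desired $p_n\to p$ lying in $K(x_n,\lambda_n)$.

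The main obstacle is exactly this boundary case: when $p$ sits on the level set $\{H(x,\cdot)=\lambda\}$, a downward fluctuation $\lambda_n<\lambda$, or a slight shrinking of the sublevel set as $x_n\to x$, can expel $p$ from $K(x_n,\lambda_n)$ for infinitely many $n$, so $p_n=p$ need not work. The interior-point reduction above is what resolves it, and it is here that assumption (D) is essential, through the uniform interior ball $B(0,\alpha)\subset K(x,\lambda)$; assumption (E) provides the same inner-approximation of sublevel sets uniformly in $x$ and could be substituted to carry out the diagonalization with explicit quantitative control. I would finally remark that the sole degenerate point is $\lambda=0$, where $K(x,0)=\{H(x,\cdot)=0\}$ may possess no interior point to exploit; this endpoint lies outside the range of levels relevant to problem \eqref{prob} and, if needed, must be handled separately from the standing hypotheses on the zero set of $H$.
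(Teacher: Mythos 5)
The paper states Lemma \ref{lsc} without a proof, so there is nothing to compare against line by line; judged on its own, your reduction of the lower semicontinuity of $L$ to the inner (Kuratowski lower) semicontinuity of the sublevel map $(x,\lambda)\mapsto K(x,\lambda)$ is the natural strategy, and your treatment of the case $H(x,p)<\lambda$ is correct. The gap is in the boundary case. From $0\in \operatorname{int}K(x,\lambda)$ and convexity you may conclude that $\tilde p_t=(1-t)p$ lies in the \emph{interior of the closed sublevel set} $\{H(x,\cdot)\le\lambda\}$, but this is not the same as $H(x,\tilde p_t)<\lambda$: the interior of a closed sublevel set may contain points of the level set $\{H(x,\cdot)=\lambda\}$. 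Concretely, take $H=H(p)$ in $\R^2$ with $\{H\le\mu\}=\overline{B}(0,r(\mu))$ for $\mu<\lambda$, where $r(\mu)\uparrow \tfrac12|p_*|$, and $\{H\le\lambda\}=\operatorname{conv}\bigl(\overline{B}(0,\tfrac12|p_*|)\cup\{p_*\}\bigr)$: this $H$ is continuous, quasi-convex, coercive and satisfies \eqref{propD}, yet $H\equiv\lambda$ on the whole segment $[\tfrac12 p_*,\,p_*]$, so for small $t$ the point $(1-t)p_*$ never enters the strict sublevel set and your interior-point reduction produces no admissible sequence $p_n$. In other words, quasi-convexity together with \eqref{propD} and Remark (ii) --- which is all your boundary argument actually invokes --- is genuinely insufficient.

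What is needed is precisely assumption \eqref{propE}, which supplies the inner approximation $\{H(x,\cdot)<\lambda\}\subset\{H(x,\cdot)<\overline{\lambda}\}+B(0,\beta)$ for $\overline{\lambda}<\lambda$ close to $\lambda$, uniformly in $x$ on compact subsets; this is exactly the mechanism the paper uses for the analogous one-sided continuity in $\lambda$ in the proof of Proposition \ref{continuitydlambda}. You mention \eqref{propE} only parenthetically, as an optional substitute ``with explicit quantitative control,'' but it is not optional: the boundary case must be run through \eqref{propE} (approximate $p$ by points of a strictly lower sublevel set, then apply your strict-case argument to those points and diagonalize). Your closing caveat about the degenerate level $\lambda=0$ is fair and correctly flagged as requiring separate treatment.
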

\begin{proof}
The lower semicontinuity of $\lambda\mapsto L_\lambda(x,q)$ is due to the property \eqref{propE}.
\end{proof}
\begin{rmk}
Lemma \ref{lsc} together with the Remark \ref{propofL} implies continuity of $L_\lambda(\cdot,q)$. Notice that the continuity of $L_\lambda(x,\cdot)$ is given by convexity.
\end{rmk}
We have also the following result of lower semicontinuity in $x$ of $L_\lambda$, uniform with respect to $q$.
\begin{lem}\label{uniflsc} Let $\lambda >0$, $x_0 \in \bO$, $K>0$. Then for all $ \eta >0$ there exists $ \delta >0$ such that 
$$ |q| \leq K, \ |x-x_0| \leq \delta  \ \ \ \Rightarrow \ \ \ L_\lambda (x,q) \geq L_\lambda (x_0,q) -\eta.$$
\end{lem}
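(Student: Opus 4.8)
The plan is to prove Lemma 2.8 (the uniform lower-semicontinuity in $x$): for fixed $\lambda>0$, $x_0\in\bO$, $K>0$ and any $\eta>0$, find $\delta>0$ so that $|q|\le K$ and $|x-x_0|\le\delta$ imply $L_\lambda(x,q)\ge L_\lambda(x_0,q)-\eta$. The natural strategy is to unfold the definition $L_\lambda(x_0,q)=\sup\{p\cdot q:H(x_0,p)\le\lambda\}$: for each such $q$ pick a near-maximizing $p=p(q)$ with $H(x_0,p)\le\lambda$ and $p\cdot q\ge L_\lambda(x_0,q)-\eta/2$. By the coercivity assumption \eqref{propB} this $p$ lies in a fixed ball $B(0,M)$ depending only on $\lambda$, which is the crucial uniform bound. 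The difficulty is that this same $p$ need not satisfy $H(x,p)\le\lambda$ when $x\ne x_0$, so $p$ is not admissible in the sup defining $L_\lambda(x,q)$; I must repair this.

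First I would exploit the local uniform continuity of $H$ from Remark 2.2(i): on $\overline{B(x_0,r)}\times \overline{B(0,M+1)}$ there is a modulus $\omega$ with $|H(x,p)-H(x_0,p)|\le\omega(|x-x_0|)$. Choosing a slightly smaller target sublevel $\mu<\lambda$ and invoking property \eqref{propD}, the set $\{H(x_0,\cdot)\le\mu\}+B(0,\alpha)\subset\{H(x_0,\cdot)\le\lambda\}$ gives room to shrink $p$ toward the origin. Concretely I would replace $p$ by $tp$ for $t$ slightly less than $1$: by quasi-convexity \eqref{propA} and $H(x_0,0)=0$ the point $tp$ still satisfies $H(x_0,tp)\le\lambda$, and it sits strictly inside the sublevel set, so that a small perturbation in $x$ keeps it sublevel. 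Then for $|x-x_0|$ small, $\omega(|x-x_0|)$ is tiny and $H(x,tp)\le\lambda$ holds, making $tp$ admissible for $L_\lambda(x,q)$; hence $L_\lambda(x,q)\ge tp\cdot q\ge t\,(L_\lambda(x_0,q)-\eta/2)$. Using the bound $|p\cdot q|\le M K$ from \eqref{boundsLlambda} and choosing $t$ close enough to $1$ absorbs the loss into $\eta/2$, giving the claim with a $\delta$ independent of $q$.

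The uniformity in $q$ is the point requiring care, and it is exactly where the coercivity \eqref{propB} does the work: because every near-optimal $p$ lies in the fixed ball $B(0,M)$ regardless of which unit-scaled $q$ we test, the modulus $\omega$ on the fixed compact set $\overline{B(x_0,r)}\times\overline{B(0,M+1)}$ and the scaling factor $t$ can be chosen once and for all, so the resulting $\delta$ does not depend on $q$. The main obstacle I anticipate is making the dilation-and-perturbation step fully quantitative: one must verify that the inner margin furnished by \eqref{propD} together with the modulus $\omega$ genuinely forces $H(x,tp)\le\lambda$ uniformly over $|q|\le K$, rather than merely for each fixed direction. I would handle this by first fixing $t$ (hence the uniform interior margin coming from the dilation and property \eqref{propD}), then choosing $\delta$ small enough that $\omega(\delta)$ is smaller than that margin; this decouples the two estimates and yields the uniform $\delta$ cleanly.
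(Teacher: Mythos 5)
Your route is genuinely different from the paper's: the paper argues by contradiction, extracting a convergent subsequence $q_n\to q$ from a hypothetical sequence of counterexamples and playing the joint lower semicontinuity of $L_\lambda$ (Lemma \ref{lsc}) against the continuity of $L_\lambda(x_0,\cdot)$ in $q$, which makes the uniformity in $q$ automatic. Your direct construction is attractive, but it has a genuine gap at the repair step. The uniform continuity of $H$ controls the difference of \emph{values}, $|H(x,tp)-H(x_0,tp)|\le\omega(|x-x_0|)$, so to conclude $H(x,tp)\le\lambda$ you need a \emph{value} margin: $H(x_0,tp)\le\mu$ for some fixed $\mu<\lambda$ independent of $p$ and $q$. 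What the dilation $p\mapsto tp$ together with quasi-convexity, $H(x_0,0)=0$ and property \eqref{propD} actually yields is a \emph{geometric} margin, namely $tp+B(0,(1-t)\alpha)\subset\{H(x_0,\cdot)\le\lambda\}$; comparing ``$\omega(\delta)$'' with ``that margin'', as you propose, compares an increment of $H$ with a distance in $p$-space, and no conversion between the two follows from the hypotheses you invoke. Indeed the intermediate claim is false under \eqref{propA}--\eqref{propD} alone: in dimension one take $H(x,p)=\lambda_0\,\phi(|p|)$ with $\phi$ strictly increasing from $0$ to $1$ on $[0,1/2]$, constant equal to $1$ on $[1/2,1]$, and strictly increasing to $+\infty$ afterwards. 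This $H$ satisfies \eqref{propA}, \eqref{propB}, \eqref{propC} and \eqref{propD}, yet for $p=1$ and any $t\in(1/2,1)$ one has $H(x_0,tp)=\lambda_0$ exactly: dilating produces no value margin at the level $\lambda=\lambda_0$.

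The missing ingredient is property \eqref{propE}, which you never use. It is precisely what rules out the example above (there $\{H\le\lambda_0\}=[-1,1]$ while $\{H\le\bar\lambda\}\subset[-1/2,1/2]$ for every $\bar\lambda<\lambda_0$, so the sublevels jump) and what converts geometric interiority into strict sub-levelness: \eqref{propE} forces $\{H(x_0,\cdot)<\lambda\}$ to be dense in $\{H(x_0,\cdot)\le\lambda\}$, hence, this being a convex set with nonempty interior, every interior point of the closed sublevel set satisfies $H(x_0,\cdot)<\lambda$, and a compactness argument then gives a uniform $\mu_t<\lambda$ on the dilated set. A cleaner repair, closer to the computation in Proposition \ref{continuitydlambda}, avoids the dilation altogether: use \eqref{propE} to get $L_\lambda(x_0,q)\le L_{\lambda'}(x_0,q)+\beta|q|$ for $\lambda'<\lambda$ close enough, choose the near-maximizer $p$ with $H(x_0,p)\le\lambda'$, and then your uniform-continuity step closes with the value margin $\lambda-\lambda'$. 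Finally, note that the lemma allows $x_0\in\bO$ while the local uniform continuity of Remark \ref{remarkpropr} is only guaranteed for $x_0\in\Omega$, so boundary points would need a separate treatment in your scheme.
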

\begin{proof} 
Let us assume by contradiction that there exist $\bar{\eta}>0$ such that for all $n\ge0$, there exists $q_{n}$, with $q_n\le K$, and $x_{n}\in\bO$, with $|x_{n}-x|<2^{-n}$, such that 
\begin{equation*}
L_\lambda(x_{n}-q_{n})-L_\lambda(x_{0},q_{n})<-\bar{\eta}.    
\end{equation*}
Since $K$ is compact, up to the choice of a subsequence, we have that $q_{n}$ converges to a certain point $q\in K$. Then we get the following contradiction:
\begin{equation*}
 L_\lambda(x_{0},q)\le \liminf_{n\to\infty}L_\lambda(x_{n},q_{n})<\liminf _{n\to\infty}
 L_\lambda(x_{0},q_{n})-\bar{\eta}=L_\lambda(x_{0},q)-\bar{\eta},   
\end{equation*}
where the first inequality is due to Lemma \ref{lsc} and the equality follows from the convexity, and then the continuity, of $L_\lambda(x,\cdot)$.

\end{proof}

\subsection{The family of pseudo-distances}
 The fact that $L_{\lambda}$ is a Finsler metric for all $\lambda\ge 0$, allows for the definition of a family of pseudo-distances associated to $H$ on $\Omega$ and on the connected open subsets of $\Omega$.
 This family of pseudo-distances is useful to characterize absolute minimizers as we will see in the following section.
   
  \begin{defn}\label{distance}
	For any $x,y\in \Omega$ and any $\lambda\ge 0$, we set
	\begin{equation*}
	d_{\lambda}(x,y):=\inf\left\{\int_{0}^{1}L_{\lambda}(\xi(t),\dot{\xi}(t))dt: \xi\in \mathsf{path}(x,y) \right\},
	\end{equation*}
	where 
	\begin{equation}\label{path}
\mathsf{path}(x,y):=\{\xi\in \Winf((0,1),\Omega)\cap\C([0,1],\Omega):\xi(0)=x, \ \xi(1)=y   \}.
	\end{equation}
	
	For any $x,y\in\overline{\Omega}$ we set 
	\begin{equation*}
	d_{\lambda}(x,y):=\inf\left\{\liminf_{n\rightarrow+\infty}d_{\lambda}(x_{n},y_{n}) : (x_{n})_{n},(y_{n})_{n}\in \Omega^{\mathbb{N}} \ and \ x_{n}\rightarrow x,y_{n}\rightarrow y  \right\}.
	\end{equation*}
\end{defn}

\begin{rmk}\label{propertiesofdistance}\hfill
\begin{enumerate}[(i)]
\item Since the boundary of $\Omega$ is not necessarily regular, one may have $d_{\lambda}(\tilde{x},y)=+\infty$ for some $\tilde{x}\in\pO$ and $y\in \Omega$: in this case, $d_{\lambda}(\tilde{x},y)=+\infty$ for any $y\in \Omega$ due to the connectedness of $\Omega$; 
\item $d_{\lambda}$ is not a priori symmetric, but it satisfies the triangular inequality
\begin{equation*}
d_{\lambda}(x,y)\le d_{\lambda}(x,z)+d_{\lambda}(z,y),
\end{equation*}
for all $x,y\in\overline{\Omega}$ and for all $z\in \Omega$. The inequality may be false for $z\in\pO$\footnote{Consider, for example, $\Omega=\{(x,y) \in \R^2 \ : \ 1 < x^2+y^2<4\  \mbox{and}\ (x,y)\not \in \{0\}\times(-2,-1) \}$, $z \in \{0\}\times(-2,-1)$ and two points in $V$ very close to $z$ but on opposite sides of the segment $ \{0\}\times(-2,-1)$ }. However it holds for every $x,y,z\in\bO$ if $\partial \Omega$ is lipschitz;
\item the definition of $d_{\lambda}$ does not depend on the choice of the domain of the curves $\gamma\in\path$. Indeed, thanks to the absolute $1$-homogeneity of $L_{\lambda}$ with respect to $q$, if $\gamma:[a,b]\to\Omega$ is a Lipschitz curve and $\tilde{\gamma}:[0,1]\to\Omega$ is such that $\gamma(t)=\tilde{\gamma}(\phi(t))$, where $\phi:[a,b]\to[0,1]$ is an increasing reparameterization, we have 
\begin{equation*}
    \int_{a}^{b}L(\gamma(t),\dot{\gamma(t)})dt=\int_{a}^{b}L(\tilde{\gamma}(\phi(t)),\phi'(t)\dot{\tilde{\gamma}}(\phi(t))dt=\int_{0}^{1}L(\tilde{\gamma}(s),\dot{\tilde{\gamma}}(s))ds.
\end{equation*}
\item the inequalities \eqref{boundsLlambda} implies that $d_\lambda  (x,y)$ is equivalent to the intrinsic distance in $\Omega$, that is: for every $x,y\in \bO$
\begin{equation}
\label{equivalenceintrinsicdistance}
\alpha|x-y|_{\Omega}\le d_{\lambda}^{\Omega}(x,y)\le M|x-y|_{\Omega},
\end{equation}
where 
\begin{equation*}
|x-y|_{\Omega}=\inf\left\{\int_{0}^{1}|\dot{\gamma}|dt: \gamma\in\path(x,y) \right\};
\end{equation*}
\item If $\pO$ is Lipschitz then $d_{\lambda}$ is equivalent to the Euclidean distance.
\item  By definition, the function $\lambda\mapsto d_{\lambda}$ is non-decreasing. In particular the property \eqref{propD} of $H$ yields the strict monotonicity. 
\end{enumerate}
\end{rmk}

\begin{rmk}
Sometimes it could be useful to restrict Definition \ref{distance} to open subsets $V$ of $\Omega$, that are connected and well contained in $\Omega$. For example when property \eqref{propD} holds only locally (see Example \ref{localD}). In this case the distance between two points $x,y\in V$ will be
	\begin{equation*}
	d_{\lambda}^{V}(x,y):=\inf\left\{\int_{0}^{1}L_{\lambda}(\xi(t),\dot{\xi}(t))dt: \xi\in \mathsf{path}_{V}(x,y) \right\},
	\end{equation*}
	where 
	\begin{equation*}
\mathsf{path}_{V}(x,y):=\{\xi\in \Winf((0,1),V)\cap\C([0,1],V):\xi(0)=x, \ \xi(1)=y   \}.
\end{equation*}
We point out that all the properties listed in Remark \ref{propertiesofdistance} and all the results in this paper hold also when we restrict to $V$.
\end{rmk}
	\begin{prop}
	\label{continuitydlambda}
The map $\lambda\mapsto d_{\lambda}(x,y)$ is left continuous on $\mathbb{R}_{\ge0}$ for every $(x,y)\in \overline{\Omega}\times \overline{\Omega}$.
\end{prop}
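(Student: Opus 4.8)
The plan is to exploit the monotonicity of $\lambda\mapsto d_{\lambda}(x,y)$ recorded in Remark \ref{propertiesofdistance}(vi). For $\lambda>0$ the non-decreasing map has a left limit $D:=\sup_{\mu<\lambda}d_{\mu}(x,y)=\lim_{\mu\uparrow\lambda}d_{\mu}(x,y)$, which automatically satisfies $D\le d_{\lambda}(x,y)$; hence the entire content of the statement is the reverse inequality $d_{\lambda}(x,y)\le D$ (left continuity at $\lambda=0$ being vacuous since the domain is $\mathbb{R}_{\ge 0}$).

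The first ingredient I would establish is the pointwise monotone convergence of the conjugates,
\[\sup_{\mu<\lambda}L_{\mu}(x,q)=L_{\lambda}(x,q)\qquad\text{for every }x\in\Omega,\ q\in\Rd.\]
Since $\bigcup_{\mu<\lambda}\{H(x,\cdot)\le\mu\}=\{H(x,\cdot)<\lambda\}$, the left-hand side is the support function of $\{H(x,\cdot)<\lambda\}$, while $L_{\lambda}(x,\cdot)$ is by definition the support function of $\{H(x,\cdot)\le\lambda\}$; as a support function only sees the closure of a convex set, the identity reduces to the density of $\{H(x,\cdot)<\lambda\}$ in the convex body $\{H(x,\cdot)\le\lambda\}$. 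This is exactly where Remark \ref{remarkpropr}(iii) enters: because $\{H(x,\cdot)=\lambda\}$ has empty interior and $0$ lies in the interior of $\{H(x,\cdot)\le\lambda\}$ (Remark \ref{remarkpropr}(ii)), the strict sublevel is dense in the closed one, so the two support functions coincide.

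For interior points $x,y\in\Omega$, where $d_{\lambda}(x,y)<\infty$ by \eqref{equivalenceintrinsicdistance}, the core is a semicontinuity argument. Fixing $\eps>0$, for each $\mu\in[\lambda/2,\lambda)$ I choose a constant-speed curve $\xi_{\mu}\in\mathsf{path}(x,y)$ with $\int_{0}^{1}L_{\mu}(\xi_{\mu},\dot\xi_{\mu})\,dt\le d_{\mu}(x,y)+\eps\le d_{\lambda}(x,y)+\eps$. The bounds \eqref{boundsLlambda}, uniform in $\mu$ on this range thanks to the monotonicity of $L_{\mu}$, make the family $\{\xi_{\mu}\}$ uniformly Lipschitz, so by Arzel\`a--Ascoli a subsequence converges uniformly to some $\xi\in\mathsf{path}(x,y)$ with $\dot\xi_{\mu}\rightharpoonup\dot\xi$ weakly. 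For any fixed $\nu<\lambda$ and $\mu>\nu$ one has $L_{\mu}\ge L_{\nu}$, whence
\[\int_{0}^{1}L_{\mu}(\xi_{\mu},\dot\xi_{\mu})\,dt\ge\int_{0}^{1}L_{\nu}(\xi_{\mu},\dot\xi_{\mu})\,dt,\]
and since $L_{\nu}$ is a \emph{fixed} nonnegative integrand, convex in $q$ and lower semicontinuous in $(x,q)$ (Lemma \ref{lsc}), the classical weak lower-semicontinuity theorem gives $\liminf_{\mu}\int_{0}^{1}L_{\nu}(\xi_{\mu},\dot\xi_{\mu})\,dt\ge\int_{0}^{1}L_{\nu}(\xi,\dot\xi)\,dt$. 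Letting $\nu\uparrow\lambda$ and invoking the conjugate limit above together with monotone convergence yields $\liminf_{\mu}\int_{0}^{1}L_{\mu}(\xi_{\mu},\dot\xi_{\mu})\,dt\ge\int_{0}^{1}L_{\lambda}(\xi,\dot\xi)\,dt\ge d_{\lambda}(x,y)$. As the left-hand side is bounded above by $D+\eps$, letting $\eps\to0$ gives $d_{\lambda}(x,y)\le D$.

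For general $x,y\in\overline{\Omega}$ I would transfer the interior estimate through the definition of $d_{\lambda}$ on $\overline{\Omega}$ as an infimum of $\liminf$s along interior approximating sequences, by a diagonal argument, handling the case $d_{\lambda}(x,y)=+\infty$ by observing that a uniform bound on $d_{\mu}(x,y)$ for $\mu<\lambda$ would, via the same compactness, force $d_{\lambda}(x,y)<\infty$. I expect the main obstacle to be precisely the semicontinuity step in the interior, where the integrand $L_{\mu}$ itself drifts with the parameter while the derivatives converge only weakly; the device that dissolves the difficulty is to bound $L_{\mu}$ below by the \emph{fixed} $L_{\nu}$, apply standard weak lower semicontinuity there, and only afterwards let $\nu\uparrow\lambda$ using $L_{\nu}\uparrow L_{\lambda}$. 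The passage to boundary points is the secondary technical point.
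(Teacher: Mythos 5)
Your route is genuinely different from the paper's, and it is considerably heavier than it needs to be: the paper gets the result almost for free from property \eqref{propE}, which for $|\lambda-\overline\lambda|<\delta$ gives $\{H(x,\cdot)<\lambda\}\subset\{H(x,\cdot)<\overline\lambda\}+B(0,\beta)$ uniformly in $x$, hence the quantitative bound $L_{\lambda}(x,q)\le L_{\lambda_{\bar n}}(x,q)+\beta|q|$ and therefore $d_{\lambda}(x,y)\le d_{\lambda_n}(x,y)+\beta|x-y|_{\Omega}$ for \emph{all} pairs $(x,y)$ simultaneously. No compactness, no lower semicontinuity of integral functionals, and the passage to $\overline\Omega\times\overline\Omega$ is immediate because the estimate is uniform in $(x,y)$. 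Your substitute for this — monotone convergence of the support functions $L_\mu\uparrow L_\lambda$ plus a direct-method argument on near-optimal curves — is a reasonable skeleton, and the density of $\{H(x,\cdot)<\lambda\}$ in $\{H(x,\cdot)\le\lambda\}$ does follow from Remark \ref{remarkpropr}(ii)--(iii) as you say.

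However, there are two concrete gaps. First, your Arzel\`a--Ascoli limit $\xi$ of the curves $\xi_\mu\in\mathsf{path}(x,y)$ is only valued in $\overline\Omega$, and on $\partial\Omega$ the integrand $L_\lambda$ is not even defined; worse, the final inequality $\int_0^1 L_\lambda(\xi,\dot\xi)\,dt\ge d_\lambda(x,y)$ is simply false for general curves touching $\partial\Omega$ — the paper's own slit-annulus footnote in Remark \ref{propertiesofdistance}(ii) exhibits boundary shortcuts for which the triangle inequality, and hence this lower bound, fails. Ruling out that the limit curve takes such a shortcut requires an additional argument (e.g.\ a segment-wise estimate $d_\nu(\xi(s),\xi(t))\le\liminf_\mu\int_s^t L_\nu(\xi_\mu,\dot\xi_\mu)\,dt$ combined with the length-space property), which you do not supply. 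Second, the extension to $x,y\in\overline\Omega$ does not follow from the interior statement by a diagonal argument: you would need
$\inf_{(x_n),(y_n)}\liminf_n\sup_{\mu<\lambda}d_\mu(x_n,y_n)\le\sup_{\mu<\lambda}\inf_{(x_n),(y_n)}\liminf_n d_\mu(x_n,y_n)$, an interchange of $\inf$ and $\sup$ in the unfavorable direction, and your pointwise (non-uniform in $(x,y)$) interior left-continuity gives no control on it. The uniform estimate $d_\lambda\le d_{\lambda_n}+\beta|x-y|_\Omega$ is precisely what closes both gaps, and it is what property \eqref{propE} hands you directly.
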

\begin{proof}
	Let's fix $\lambda\ge0$ and consider a sequence $(\lambda_{n})_{n\in\mathbb{N}}\subset\mathbb{R}_{\ge 0}$ such that $\lambda_{n}\rightarrow\lambda$ from the left. Let's then take $\beta>0$. From the property (E) of $H$ we know that there exists $\delta$ such that 
	\begin{equation*}
	|\lambda-\overline{\lambda}|<\delta \ \mbox{implies}\  \left\{H(x,\cdot)<\lambda\right\}\subset\left\{H(x,\cdot)<\overline{\lambda}\right\}+B(0,\beta).
	\end{equation*}
	So, if we take $\bar{n}$ such that $|\lambda_{\bar{n}}-\lambda|<\delta$, we obtain 
	\begin{align}
	\notag&L_\lambda(x,q):=\sup\{p\cdot q: H(x,p)\le\lambda\}\\ 
	\notag&\le\sup\{(\tilde{p}+w_{\beta})\cdot q: H(x,\tilde{p})\le\lambda_{\bar{n}}, \ w_{\beta}\in B(0,\beta) \}\\ 
	\label{continuityL}&= L_{\lambda_{\bar{n}}}(x,q)+\beta|q|,
	\end{align}
	for every $x\in V$. 
	
	From \eqref{continuityL} and  from the monotonicity of $\lambda\mapsto d_{\lambda}$ we infer that, for any $(x,y)\in \Omega\times \Omega$, 
	\begin{equation}
	\label{continuityd}
	d_{\lambda_{n}}(x,y)<d_{\lambda}(x,y)\le d_{\lambda_{n}}(x,y)+\beta |x-y|_{\Omega}.
	\end{equation}
	The left continuity in $\Omega\times \Omega$ is then proved. The extension of this result to any two points $x,y\in \overline{\Omega}\times\overline{\Omega}$ follows by applying the  \eqref{continuityd} to any two converging sequences $x_{n}\to x$ and $y_{n}\to y$ and then considering the infimum.
\end{proof}
\begin{prop}
	\label{continuitydxy}
	The function $(x,y)\mapsto d_{\lambda}(x,y)$ is continuous in $\Omega\times \Omega$ for every $\lambda\ge 0$. Moreover if $\partial \Omega$ is Lipschitz, $d_{\lambda}(x,y)$ is continuous in  $\overline{\Omega}\times\overline{\Omega}$.
\end{prop}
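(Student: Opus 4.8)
The plan is to derive a local Lipschitz estimate for $(x,y)\mapsto d_\lambda(x,y)$ from the two structural facts already at hand: the triangle inequality of Remark \ref{propertiesofdistance}(ii) and the two-sided comparison \eqref{equivalenceintrinsicdistance} with the intrinsic distance $|\cdot|_\Omega$. Continuity then follows because a local Lipschitz bound in each variable gives joint continuity.

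First I would treat the interior. Fix $x,y\in\Omega$ and take $x',y'\in\Omega$. Since the two intermediate points inserted below lie in $\Omega$, the triangle inequality applies and yields
\[
d_\lambda(x',y')\le d_\lambda(x',x)+d_\lambda(x,y)+d_\lambda(y,y'),
\]
together with the symmetric inequality obtained by exchanging the primed and unprimed points, so that
\[
|d_\lambda(x',y')-d_\lambda(x,y)|\le \max\{d_\lambda(x',x)+d_\lambda(y,y'),\ d_\lambda(x,x')+d_\lambda(y',y)\}.
\]
By \eqref{equivalenceintrinsicdistance} each term on the right is bounded by $M$ times the corresponding intrinsic distance. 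Because $x$ is interior I may choose $r$ with $B(x,r)\subset\Omega$; for $x'\in B(x,r)$ the segment $[x,x']$ stays in $\Omega$, hence $|x'-x|_\Omega=|x'-x|$, and likewise for $y$. Thus the right-hand side is at most $M(|x'-x|+|y'-y|)$ once $x',y'$ are close enough, which is the desired local Lipschitz continuity on $\Omega\times\Omega$.

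For the second assertion I would invoke Remark \ref{propertiesofdistance}(v): when $\partial\Omega$ is Lipschitz the intrinsic distance is comparable to the Euclidean one, so \eqref{equivalenceintrinsicdistance} upgrades to $d_\lambda(x,y)\le C|x-y|$ for all $x,y\in\bO$. The same increment estimate as above, now with Euclidean distances on the right, gives $|d_\lambda(x',y')-d_\lambda(x,y)|\le C(|x'-x|+|y'-y|)$, first for interior points. To reach $\bO\times\bO$ I would approximate arbitrary boundary points by interior sequences and pass to the liminf in the definition of $d_\lambda$ on $\bO$; the Lipschitz estimate is stable under this passage because its right-hand side depends only on the Euclidean positions of the limit points.

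The main obstacle is exactly the behaviour of the increments $d_\lambda(x',x)$ as the points approach $\partial\Omega$. For a general domain two Euclidean-close boundary points may be far apart---or even disconnected---in the intrinsic metric (the slit domain of the footnote to Remark \ref{propertiesofdistance}(ii)), so continuity up to $\bO$ genuinely fails and the Lipschitz hypothesis is what rules this out. Two minor technical points must be handled along the way: the triangle inequality is only guaranteed through interior intermediate points, so the boundary estimate has to be obtained by approximation rather than directly; and $d_\lambda$ is not symmetric, which is why the increment is controlled by a maximum of two expressions, the triangle inequality being applied in both orders.
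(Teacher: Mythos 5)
Your argument is correct and follows essentially the same route as the paper: the two-sided triangle inequality through interior intermediate points controls the increment $|d_\lambda(x',y')-d_\lambda(x,y)|$ by $d_\lambda(x',x)+d_\lambda(y,y')$ (and its reverse), and the comparison \eqref{equivalenceintrinsicdistance} with the intrinsic distance makes these increments vanish, with the Lipschitz-boundary hypothesis supplying the comparability with the Euclidean distance needed up to $\overline\Omega$. Your explicit handling of the boundary case by approximation through interior sequences is a slightly more careful rendering of the same step the paper dispatches in one line.
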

\begin{proof}
 We first notice that the \eqref{equivalenceintrinsicdistance} implies that  
	\begin{equation*}
		\dlambda(x_{n},x)\to 0 \ \mbox{and } \dlambda(x,x_{n})\to 0 \quad \mbox{for every } x\in\Omega \ \mbox{and } (x_n)\subset\Omega: x_{n}\to x.
	\end{equation*}
	Then the thesis follows directly from the triangular inequality, indeed (forgetting the $\lambda$  for lighter notations) for every $x,y\in\Omega$ and $(x_n),(y_n)\subset\Omega$ such that $x_{n}\to x$ and $y_{n}\to y$, 
\[ d(x,y)-d(x_{n},y_{n})\le d(x,x_{n})+d(x_{n},y_{n})+d(y_{n},y)-d(x_{n},y_{n})=d(x,x_{n})+d(y_{n},y) \]
and
	\[ d(x_{n},y_{n})-d(x,y)\le d(x_{n},x)+d(x,y)+d(y,y_{n})-d(x,y)=d(x_{n},x)+d(y,y_{n}).
	\]
	If $\pO$ is Lipschitz the triangular inequality holds also if $x,y\in\partial \Omega$, so $\dlambda$ is continuous till $\pO$. 
\end{proof}

\section{Absolute minimizers and pseudo-distances}
The following is a ``lipschitz type" characterization of the absolute minimizers. 

\begin{thm}\label{lambdacontinuity}
	Let $u\in W^{1,\infty}(\Omega)\cap\C(V)$. Then $u$ is such that $H(\cdot, Du(\cdot))\le\lambda$ a.e. in $\Omega$ for some $\lambda\ge0$ if and only if for any $x,y\in \Omega$ one has $u(y)-u(x)\le d_{\lambda}(x,y)$. If $u\in\C(\bO)$ then $u(y)-u(x)\le d_{\lambda}(x,y)$ holds for any $x,y\in\bO$.
\end{thm}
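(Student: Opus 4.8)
The plan is to establish the two implications separately, using in both directions the duality between the closed convex sublevel set $\{H(x,\cdot)\le\lambda\}$ and its support function $L_\lambda(x,\cdot)$. Concretely, the characterization \eqref{charsublevels} together with the positive $1$-homogeneity of $L_\lambda(x,\cdot)$ gives, for each fixed $x$, the pointwise equivalence
\[ H(x,p)\le\lambda \quad\Longleftrightarrow\quad p\cdot q\le L_\lambda(x,q)\ \ \text{for all }q\in\Rd, \]
which is the bridge between the almost-everywhere gradient bound and the path-integral defining $d_\lambda$.

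For the implication ``$\Rightarrow$'', I would fix $x,y\in\Omega$ and an arbitrary $\xi\in\path(x,y)$ and prove $u(y)-u(x)\le\int_0^1 L_\lambda(\xi(t),\dot\xi(t))\,dt$; passing to the infimum over $\xi$ then yields $u(y)-u(x)\le d_\lambda(x,y)$ by Definition \ref{distance}. Since $Du$ is only defined almost everywhere I would not differentiate $u$ along $\xi$ directly, but instead mollify, setting $u_\epsilon=u*\rho_\epsilon$, which is smooth in a neighbourhood of the compact set $\xi([0,1])$ once $\epsilon$ is small. For each $t$ one writes $Du_\epsilon(\xi(t))=\int Du(\xi(t)-z)\rho_\epsilon(z)\,dz$; because the bad set $B=\{w:H(w,Du(w))>\lambda\}$ is Lebesgue-null, its translate $\xi(t)-B$ is null for every fixed $t$, so the pointwise duality above applies for almost every $z$ and integrates to $Du_\epsilon(\xi(t))\cdot\dot\xi(t)\le\int L_\lambda(\xi(t)-z,\dot\xi(t))\rho_\epsilon(z)\,dz$. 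Integrating the smooth chain rule over $[0,1]$ and letting $\epsilon\to0$ finishes this direction: the left-hand side $u_\epsilon(y)-u_\epsilon(x)$ converges by locally uniform convergence of the mollifications, while the right-hand side converges to $\int_0^1 L_\lambda(\xi(t),\dot\xi(t))\,dt$ by dominated convergence, the continuity of $L_\lambda(\cdot,q)$ (Lemma \ref{lsc} and the subsequent remark) giving the pointwise limit and the bound $L_\lambda(\cdot,q)\le M|q|$ from \eqref{boundsLlambda} providing the integrable majorant $M|\dot\xi(t)|$.

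For the converse ``$\Leftarrow$'', I would fix a point $x_0$ of differentiability of $u$ --- almost every point, by Rademacher's theorem --- and check the dual criterion $Du(x_0)\cdot q\le L_\lambda(x_0,q)$ for every $q$, which by the displayed equivalence is exactly $H(x_0,Du(x_0))\le\lambda$. Fixing $q$ and testing the hypothesis along the segment $\xi(t)=x_0+tsq$, which lies in $\Omega$ for $s>0$ small since $\Omega$ is open, the $1$-homogeneity of $L_\lambda$ gives $u(x_0+sq)-u(x_0)\le d_\lambda(x_0,x_0+sq)\le s\int_0^1 L_\lambda(x_0+tsq,q)\,dt$. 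Dividing by $s$ and letting $s\to0^+$, the difference quotient tends to $Du(x_0)\cdot q$ while the right-hand side tends to $L_\lambda(x_0,q)$ by continuity of $L_\lambda(\cdot,q)$, which is the required inequality.

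The extension to $x,y\in\bO$ when $u\in\C(\bO)$ is then a routine limiting argument: one applies the interior estimate to approximating sequences $x_n\to x$, $y_n\to y$ in $\Omega$ and uses the continuity of $u$ together with the definition of $d_\lambda$ on $\bO$ in Definition \ref{distance}. I expect the main obstacle to lie in the mollification step of ``$\Rightarrow$'': one must legitimately insert the pointwise duality inside the convolution despite $Du$ being defined only almost everywhere --- which is what the translation-invariance observation $|\xi(t)-B|=0$ secures --- and one must produce a $t$-integrable majorant uniform in $\epsilon$ in order to exchange the limit with the integral, for which the two-sided bound \eqref{boundsLlambda} is essential.
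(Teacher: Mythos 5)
Your proposal is correct, and on the converse implication and the boundary extension it coincides with the paper's argument: the paper likewise fixes a differentiability point $x_0$, reduces via \eqref{charsublevels} to checking $Du(x_0)\cdot q\le 1$ for $L_\lambda(x_0,q)\le 1$, tests $d_\lambda$ with the straight segment, uses $1$-homogeneity and upper semicontinuity of $L_\lambda(\cdot,q)$, and then passes to $\bO$ by approximating sequences. Where you genuinely diverge is the forward implication. The paper keeps $u$ fixed and perturbs the \emph{curve}: it introduces the null set $N$ of non-differentiability points and points where $H(x,Du(x))>\lambda$, integrates $Du\cdot\dot\gamma\le L_\lambda(\gamma,\dot\gamma)$ along curves transversal to $N$, and then invokes the appendix result (Proposition \ref{trasversal}) to approximate an arbitrary $\xi\in\mathsf{path}(x,y)$ in $W^{1,\infty}$ by transversal curves. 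You instead keep the curve fixed and mollify the \emph{function}, pushing the pointwise duality $Du\cdot q\le L_\lambda(\cdot,q)$ through the convolution via the translation-invariance of null sets, and removing the mollification at the end by uniform convergence of $u_\epsilon$ and dominated convergence with the majorant $M|\dot\xi|$ from \eqref{boundsLlambda}. Both devices serve the same purpose — circumventing the fact that $Du$ is only defined almost everywhere — and both are sound; your route dispenses with the transversality proposition entirely (and in fact only needs upper semicontinuity of $L_\lambda(\cdot,q)$, available from the continuity of $H$, rather than the full continuity you invoke), while the paper's route avoids mollification and yields the slightly stronger intermediate statement that the infimum over transversal paths already computes $d_\lambda$.
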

Theorem \ref{lambdacontinuity} may be stated in a slightly more general way \cite{ChaDep2007}  but we will not need it here.

\begin{proof}
Let's first assume that $H(\cdot,Du)\le\lambda$ a.e. in $\Omega$ for some $\lambda\ge 0$. 

Let $N:=\{x\in\Omega: u \ \mbox{is not differentiable at }x \ \mbox{or }H(x,Du(x))>\lambda \}$. It then holds that 
\begin{equation*}
u(y)-u(x)\le\inf\left\{\int_{0}^{1}\Llambda(\gamma(t),\dot{\gamma}(t))dt:\gamma\in\mathsf{path}(x,y) \ \mbox{and} \ \gamma \ \mbox{trasversal to} \ N \right\},
\end{equation*}
where trasversal means that $\mathcal{H}^{1}(\gamma((0,1))\cap N)=0$. Indeed, for any $\gamma$ trasversal to $N$, we have 
\begin{equation*}
u(y)-u(x)=\int_{0}^{1}(u\circ\gamma)'dt=\int_{0}^{1}Du(\gamma(t))\cdot\dot{\gamma}(t)dt\le\int_{0}^{1}\Llambda(\gamma(t),\dot{\gamma}(t))dt,
\end{equation*}
where the inequality follows from the definition of $\Llambda$, since $H(\cdot,Du)\le\lambda$. Let's now take $\gamma\in\mathsf{path}(x,y)$. It is possible to approximate $\gamma$ in  $\Winf((0,1))$ by a sequence $(\gamma_{k})_{k}\subset\mathsf{path}(x,y)$, with $\gamma_{k}$ trasversal to $N$ for any $k\in\mathbb{N}$ (see the Appendix, Proposition \ref{trasversal}). Then it follows that
\begin{equation}
\label{necessary} 
u(y)-u(x)\le\inf\left\{\int_{0}^{1}\Llambda(\gamma(t),\dot{\gamma}(t))dt:\gamma\in\mathsf{path}(x,y) \right\}=d_{\lambda}(x,y).
\end{equation}
If $u\in\mathcal{C}(\bar{\Omega})$, then, by \eqref{necessary} and continuity of $u$, one has
\begin{equation*}
u(y)-u(x)\le\liminf_{n\to+\infty}d_{\lambda}(x_{n},y_{n}),
\end{equation*}
 for any $(x_{n})_{n},(y_{n})_{n}$ such that $x_{n}\to x$ and $y_{n}\to y$.
Then $u(y)-u(x)\le d_{\lambda}(x,y)$ for any $x,y\in\bar{\Omega}$.\\
Suppose now that $u(y)-u(x)\le d_{\lambda}^{V}(x,y)$, for any $x,y\in\Omega$. Since $u\in\Winf(\Omega)\cap\mathcal{C}(\Omega)$ 
it is differentiable a.e. in $\Omega$. We will show that $H(x_{0},Du(x_{0}))\le \lambda$ for any $x_{0}$ that is a point of differentiability.
We observe that, by \eqref{charsublevels} it is sufficient to show that $Du(x_{0})\cdot q\le 1$ for every $q$ such that $L_{\lambda}(x,q)\le 1$.
\begin{equation*}
Du(x_{0})\cdot q=\lim\limits_{h\to 0}\frac{u(x_{0}+hq)-u(x_{0})}{h}\le\lim\limits_{h\to 0}\frac{d_{\lambda}(x_{0},x_{0}+hq)}{h}.
\end{equation*}
Moreover, for $h$ small enough, by the $1$-homogeneity of $L_{\lambda}$ we have 
\begin{equation*}
\frac{1}{h}d_{\lambda}(x_{0},x_{0}+hq)\le\frac{1}{h}\int_{0}^{1}\Llambda(x_{0}+thq,hq)dt=\int_{0}^{1}\Llambda(x_{0}+thq,q)dt.
\end{equation*}
Finally, by the upper semicontinuity of $L_{\lambda}$ it follows that  
\begin{align*}
\lim\limits_{h\to 0}\frac{d_{\lambda}(x_{0},x_{0}+hq)}{h}\le\limsup_{h\to 0}\int_{0}^{1}\Llambda(x_{0}+thq,q)dt\le\int_{0}^{1}\Llambda(x_{0},q)dt\le 1.
\end{align*}
By \eqref{charsublevels}, we have $H(x_{0},Du(x_{0}))\le\lambda$.
\end{proof}

We now state some useful results from \cite{ChaDep2007}.

\begin{defn}
	Let $g$ be a function in $W^{1,\infty}(\Omega)\cap \C(\bO)$ and $\lambda\ge 0$. We define the functions $S^{-}_{\lambda}(g)$ and $S^{+}_{\lambda}(g)$ given on $\bO$ by:
	\begin{align*}
	&\forall x\in\overline{\Omega} \ \ S^{-}_{\lambda}(g)(x)=\sup\left\{g(y)-\dlambda(x,y): y\in\partial \Omega\right\},\\
	&\forall x\in\overline{\Omega} \ \ S^{+}_{\lambda}(g)(x)=\inf\left\{g(y)+\dlambda(y,x): y\in\partial \Omega\right\}.
	\end{align*}
When $g$ is clear from the context, we will write $S^\pm_\lambda(x)$ instead of $S^\pm_\lambda(g)(x)$.
\end{defn}
\begin{thm}
	\label{optimalsolutionsprob}
Let $g$ be a function of $W^{1,\infty} (\Omega)\cap\C(\bO)$ and consider the problem 
\begin{equation}
\label{probV}
\min\left\{F(v):=\esssup{x\in \Omega}H(x,Dv(x)):v\in g+W^{1,\infty}(\Omega)\cap\C_{0}(\Omega)\right\}.
\end{equation}
Then the minimal value of this problem is
\begin{equation*}
\mu:=\min \{\lambda: g(y)-g(x)\le\dlambda(x,y) \ \mbox{for any } x,y\in\pO \}.
\end{equation*}
Moreover, the functions $S_{\mu}^{-}(g)$ and $S_{\mu}^{+}(g)$ are optimal solution of \eqref{probV} and for any optimal solution $u$ of \eqref{probV} one has
\begin{equation*}
	S_{\mu}^{-}(x)\le u(x)\le S_{\mu}^{+}(x),
\end{equation*} 
for all $x\in\bO$.
\end{thm}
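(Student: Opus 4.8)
The plan is to reduce everything to the Lipschitz-type characterization of Theorem \ref{lambdacontinuity}: for an admissible competitor $v\in\Winf(\Omega)\cap\C(\bO)$ with $v=g$ on $\pO$, the bound $F(v)\le\lambda$ is equivalent to $v(y)-v(x)\le\dlambda(x,y)$ for all $x,y\in\bO$. Under this dictionary the admissible $\lambda$-sublevel of $F$ is exactly the family of functions that are $1$-Lipschitz with respect to $\dlambda$ and coincide with $g$ on $\pO$, and $\mu$ is the threshold past which $g$ becomes compatible with this constraint on the boundary. First I would note that $\mu$ is well posed: the set $\Lambda:=\{\lambda:\ g(y)-g(x)\le\dlambda(x,y)\ \text{for all } x,y\in\pO\}$ is nonempty because $\dlambda$ grows without bound as $\lambda\to\infty$ (the sublevels $\{H(x,\cdot)\le\lambda\}$ exhaust $\Rd$, so eventually $\dlambda$ dominates the increments of $g$), and it is upward closed by the monotonicity of $\lambda\mapsto\dlambda$ (Remark \ref{propertiesofdistance}). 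The lower bound $\inf F\ge\mu$ is then immediate: any admissible $v$ with $F(v)=\lambda$ satisfies $v(y)-v(x)\le\dlambda(x,y)$ on $\bO$ by Theorem \ref{lambdacontinuity}, and restricting to $\pO$, where $v=g$, gives $\lambda\in\Lambda$, hence $\lambda\ge\mu$.

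For the reverse inequality and the optimality of $S_\mu^{\pm}(g)$ I would verify three properties of $w:=S_\mu^{-}(g)$. It lies in $\Winf(\Omega)\cap\C(\bO)$, being a supremum of the functions $x\mapsto g(z)-d_\mu(x,z)$, which by \eqref{boundsLlambda} are uniformly Lipschitz for the Euclidean distance. It satisfies $w=g$ on $\pO$: the bound $w\ge g$ follows from the admissible choice $z=x$, while $w\le g$ is precisely the statement $\mu\in\Lambda$. Finally $w(y)-w(x)\le d_\mu(x,y)$ for all $x,y\in\bO$, which drops out of the triangle inequality $d_\mu(x,z)\le d_\mu(x,y)+d_\mu(y,z)$ applied inside the supremum; Theorem \ref{lambdacontinuity} then gives $F(w)\le\mu$. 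The symmetric computation, using $d_\mu(z,y)\le d_\mu(z,x)+d_\mu(x,y)$, handles $S_\mu^{+}(g)$. Together with the lower bound, $F(S_\mu^{\pm}(g))=\mu=\inf F$, so both functions are minimizers and the minimal value equals $\mu$.

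For the two-sided bound, let $u$ be an arbitrary minimizer, so $F(u)=\mu$ and, by Theorem \ref{lambdacontinuity}, $u(y)-u(x)\le d_\mu(x,y)$ for all $x,y\in\bO$, with $u=g$ on $\pO$. Fixing $x\in\bO$ and letting $z$ range over $\pO$, the inequality $u(z)-u(x)\le d_\mu(x,z)$ rearranges to $g(z)-d_\mu(x,z)\le u(x)$, and taking the supremum over $z$ yields $S_\mu^{-}(g)\le u$; dually, $u(x)-u(z)\le d_\mu(z,x)$ gives $u(x)\le g(z)+d_\mu(z,x)$, whence $u\le S_\mu^{+}(g)$.

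The step I expect to be delicate is the attainment of the minimum defining $\mu$, that is the fact $\mu\in\Lambda$, since it is exactly this that makes $w=g$ hold on $\pO$ above. The set $\Lambda$ is upward closed with $\mu=\inf\Lambda$, but Proposition \ref{continuitydlambda} only supplies left-continuity of $\lambda\mapsto\dlambda$, whereas passing the boundary inequalities from $\lambda>\mu$ down to $\lambda=\mu$ requires right-continuity. I would recover this missing direction from the compactness of the sublevels $\{H(x,\cdot)\le\lambda\}$ (closed by \eqref{propC}, bounded by \eqref{propB}): as $\lambda\downarrow\mu$ these nested compact sets decrease to $\{H(x,\cdot)\le\mu\}$, so $L_\lambda(x,q)\downarrow L_\mu(x,q)$ pointwise, and a dominated-convergence argument along a near-optimal path for $d_\mu$ gives $\dlambda(x,y)\downarrow d_\mu(x,y)$. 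Alternatively one may invoke existence of a minimizer from the Direct Method, whose value then equals $\inf F=\mu$ and forces $\mu\in\Lambda$ through Theorem \ref{lambdacontinuity}. A secondary nuisance, to be absorbed into the boundary conventions of Definition \ref{distance}, is that $d_\mu$ may equal $+\infty$ between boundary points when $\pO$ is irregular.
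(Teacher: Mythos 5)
Your skeleton matches the paper's: reduce everything to Theorem \ref{lambdacontinuity}, get $\inf F\ge\mu$ by restricting the Lipschitz estimate to $\pO$, verify that $S_\mu^{\pm}(g)$ equal $g$ on $\pO$ and are $d_\mu$-Lipschitz via the triangle inequality, and obtain the sandwich $S_\mu^{-}(g)\le u\le S_\mu^{+}(g)$ exactly as in the paper. You also correctly isolate the one delicate point, namely the attainment $\mu\in\Lambda$, without which $S_\mu^{-}(g)=g$ on $\pO$ fails and the whole upper bound collapses. The divergence is in how that point is handled, and there your primary route has a genuine gap.

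Your proposed fix is right-continuity of $\lambda\mapsto d_\lambda(x,y)$ at $\mu$ from the right, obtained from the monotone collapse of the compact sublevels $\{H(x,\cdot)\le\lambda\}$ onto $\{H(x,\cdot)\le\mu\}$ and dominated convergence along a near-optimal path. That argument is sound for $x,y\in\Omega$. But $\Lambda$ is defined through inequalities at points of $\pO$, where $d_\lambda(x,y)$ is defined as an infimum of $\liminf_n d_\lambda(x_n,y_n)$ over approximating sequences. To pass $g(y)-g(x)\le d_\lambda(x,y)$ from $\lambda>\mu$ down to $\lambda=\mu$ you need $\inf_{\lambda>\mu}d_\lambda(x,y)=d_\mu(x,y)$ for boundary points, and this requires commuting $\inf_{\lambda>\mu}$ with the $\liminf_n$ in the boundary extension; the inequality $\inf_\lambda\liminf_n\ge\liminf_n\inf_\lambda$ goes the wrong way, and making it an equality would need the convergence $d_\lambda(x_n,y_n)\downarrow d_\mu(x_n,y_n)$ to be uniform in $n$ as $x_n,y_n$ approach the (possibly irregular) boundary — which is not available from property \eqref{propE}, stated only on $V\subset\subset\Omega$. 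So what you dismiss as a ``secondary nuisance'' is in fact the crux. Your fallback via the Direct Method would close the gap but imports the weak-$*$ lower semicontinuity of the supremal functional, which the paper never proves or uses. The paper sidesteps all of this with a small device worth noting: it \emph{redefines} $S_\mu^{-}(x)=\sup\{g(y)-d_\lambda(x,y):\lambda>\mu,\ y\in\pO\}$ (and dually for $S_\mu^{+}$). With the extra supremum over $\lambda>\mu$, the identity $S_\mu^{-}=g$ on $\pO$ uses only strict $\lambda>\mu$, for which membership in $\Lambda$ is automatic; the triangle-inequality computation then gives $S_\mu^{-}(y)-S_\mu^{-}(x)\le d_\lambda(x,y)$ for every $\lambda>\mu$, hence $F(S_\mu^{-})\le\mu$, and a final application of Theorem \ref{lambdacontinuity} at level $\mu$ yields $g(y)-g(x)\le d_\mu(x,y)$ on $\pO$ — attainment comes out as a corollary rather than an input, with no right-continuity of $\lambda\mapsto d_\lambda$ needed.
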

\begin{proof}

We first notice that the minimum $\mu$ does not need \textit{a priori} to be attained, so we first set:
\begin{equation}\label{defmu}
\mu:=\inf \{\lambda: g(y)-g(x)\le\dlambdaV(x,y) \ \mbox{for any } x,y\in\pV \}
\end{equation}
and for every $x\in\bO$,
	\begin{align*}
&\Smu^{-}(x):=\sup\left\{g(y)-\dlambda(x,y):\lambda>\mu, \ y\in\partial V\right\},\\
&\Smu^{+}(x):=\inf\left\{g(y)+\dlambda(y,x):\lambda>\mu, \ y\in\partial V\right\}.
\end{align*}
We now claim that $\Smu^{-}(x)=g(x)$ for any $x\in\pO$. Indeed, taking $y=x$ in the definition of $\Smu^{-}$ yields $\Smu^{-}(x)\ge g(x)$, while by definition of $\mu$ one has $g(y)-d_{\lambda}(x,y)\le g(x)$, for any $\lambda>\mu$ and $y\in\pO$, so that $\Smu^{-}(x)\le g(x)$, which in turn proves the claim. The same holds for $S^{+}$.\\
The second claim is that, for any $\lambda>\mu$ and for any $x,y\in\bO$, one has:
\begin{equation}
\label{estimateSminus}
\Smu^{-}(y)-\Smu^{-}(x)\le d_{\lambda }(x,y).
\end{equation} 
Indeed, take $\lambda>\mu$, $x\in\bO$ and $y\in\Omega$. We notice that since $\lambda\mapsto d_{\lambda}$ is not decreasing, the supremum in the definition of $\Smu^{-}$ can be taken for $\sigma\in(\mu,\lambda]$, so that 
\begin{align*}
&\Smu^{-}(y)-\Smu^{-}(x)=\sup_{z\in\pO,\mu<\sigma\le\lambda}\inf_{z'\in\pO,\mu<\sigma'\le\lambda}\{g(z)-d_{\sigma}(y,z)-g(z')+d_{\sigma'}(x,z')\}\\
&\le\sup_{z\in\pO,\mu<\sigma\le\lambda}\{g(z)-d_{\mu}(y,z)-g(z)+d_{\sigma}(x,z)\}\le \sup_{\mu<\sigma\le\lambda}d_{\sigma}(x,y)=d_{\lambda}(x,y),
\end{align*}
where the last inequality is due to the triangular inequality that holds since $y\in\Omega$.\\
When $y\in\pO$, we notice that  
\[ \Smu^{-}(y)=g(y) \ \text{and} \ \Smu^{-}(x)\ge g(y)-d_{\lambda}(x,y) \quad \text{for every}\lambda>\mu. \]
In a similar way the estimate \eqref{estimateSminus} can be proved for $\Smu^{+}$. \\
By Theorem \ref{lambdacontinuity} we infer that $F(\Smu^{-}),F(\Smu^{+})\le\lambda$ for every $\lambda>\mu$. It follows that $F(\Smu^{-}),F(\Smu^{+})\le\mu$. In particular, by Theorem \ref{lambdacontinuity}, we have that for every $x,y\in\pO$ 
\begin{equation*}
\Smu^{-}(x)-\Smu^{-}(y)\le d_{\mu}(x,y),
\end{equation*}
and, since $\Smu^{-}=g$ on $\pO$ this implies that the minimum in the definition of $\mu$ \eqref{defmu} is attained.\\
Moreover, we show that the minimal value for the problem \eqref{probV} is equal to $\mu$. By contradiction assume that there exists a function $u\in g+W^{1,\infty}(\Omega)\cap \C_{0}(\Omega)$ such that $F(u)\le\lambda$ for some $\lambda<\mu$. Then, by Theorem \ref{lambdacontinuity} again, we would have $u(y)-u(x)\le\dlambda(x,y)$ for all $x,y\in\bO$. Since $u=g$ on $\pO$, this contradicts the minimality of $\mu$.\\
Finally, let $u$ be an optimal solution of \eqref{probV}, i.e. $H(\cdot,Du(\cdot))\le\mu$ a.e. in $\Omega$. Then, by Theorem \ref{lambdacontinuity}, $u(y)-u(x)\le d_{\mu}(x,y)$ for any $x,y\in\bO$. If $y\in\pO$, this yields $g(y)-d_{\mu}(x,y)\le u(x)$ and then $\Smu^{-}(x)\le u(x)$. With similar arguments can be shown that $u\le\Smu^{+}$ in $\bO$. 
\end{proof}

\begin{rmk}\label{compdist}[Comparison with distance functions] Theorem \ref{optimalsolutionsprob} is the key for the so called comparison with distance functions principle (first defined in \cite{ChaDep2007}) which we present here in the 
simplified version that will be used. 

For a positive $\lambda$,  $\alpha\in \R$ and $x_0 \in \Omega$ we consider the distance functions $x \mapsto d_\lambda (x_0, x) + \alpha$ and $x \mapsto -d_\lambda (x, x_0) + \alpha$. 
These functions have the property that if $U \subset \subset \Omega$ and 
$x_0 \not \in U$ then 
\[ d_\lambda (x_0, x) + \alpha \geq S^+ (d_\lambda (x_0, x) + \alpha, U)\ \mbox{on} \ \overline U;\] 
 \[-d_\lambda (x, x_0) + \alpha \leq S^- (-d_{\lambda} (x, x_0) + \alpha, U)\ \mbox{on} \ \overline U.\]
These inequalities implies that if $u$ is an absolute minimizer, such that 
\[ \esssup{U} H(x, Du(x)) \leq \lambda, \]
and 
\[ u \leq  d_\lambda (x_0, x) + \alpha \ \mbox{on} \ \partial U, \]
then 
\[ u \leq  d_\lambda (x_0, x) + \alpha \ \mbox{on} \ \overline U, \]
and the reverse comparison holds for the other distance function.
\end{rmk}

\section{Point-wise definition of $H(x,Du(x))$}
Since $u \in W^{1,\infty} (\Omega)$ the quantity $H(x,Du(x))$ is, a priori, defined only for a.e. $x\in \Omega$. In this section we show that there exists a natural pointwise 
definition of $H(x,Du(x))$ which will be denoted by $H(x,Du)(x)$.

\begin{defn}
	Let $u\in \Winf(\Omega)\cap \C(\bO)$. For any $x_{0}\in\Omega$ and for any $r>0$ such that $r<dist(x_{0},\partial \Omega)$, we set
	\begin{equation}
	\mu(x_{0},r):=\inf\{\lambda: u(x)-u(x_{0})\le d_{\lambda} (x_0,x) \ \mbox{for any} \  x\in B(x_{0},r)\}.
	\end{equation}
	We observe that $\mu(x_{0},r)$ is not decreasing in $r$. This allows for the following definition:
	\begin{equation*}
	H(x_{0},Du)(x_{0}):=\lim_{r\rightarrow 0}\mu(x_{0},r)= \inf_r \mu (x_0,r).
	\end{equation*}
\end{defn}
\begin{lem}\label{muusc} Let $r>0$ then $x \mapsto \mu (x,r)$ is upper semicontinuous in $\Omega$. 
\end{lem}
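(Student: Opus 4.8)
The plan is to prove upper semicontinuity sequentially, by contradiction, turning the statement into a compactness argument that rests on two facts proved earlier: the joint continuity of $(x,y)\mapsto\dlambda(x,y)$ on $\Omega\times\Omega$ (Proposition \ref{continuitydxy}) and the strict monotonicity of $\lambda\mapsto\dlambda$ coming from property \eqref{propD} (see Remark \ref{propertiesofdistance}). The idea is that if $\mu(\cdot,r)$ jumped upward along some sequence $x_n\to x_0$, then the "bad" comparison points witnessing $\mu(x_n,r)>\lambda$ would accumulate at a limit point $y_0$ for which the defining inequality of $\mu(x_0,r)$ at the centre $x_0$ must fail, contradicting the choice of $\lambda$.

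Concretely, I fix $x_0\in\Omega$ and a sequence $x_n\to x_0$, and suppose for contradiction that $\limsup_n\mu(x_n,r)=:L>\mu(x_0,r)$. I choose two levels $\mu(x_0,r)<\lambda_1<\lambda<L$. By the definition of $\mu(x_0,r)$ and monotonicity in $\lambda$, the inequality $u(y)-u(x_0)\le d_{\lambda_1}(x_0,y)$ holds for all $y\in B(x_0,r)$, and by continuity of $u$ and of $y\mapsto d_{\lambda_1}(x_0,y)$ it extends to $\overline{B}(x_0,r)$. Passing to a subsequence I may assume $\mu(x_n,r)>\lambda$ for every $n$; since $\lambda$ then fails to be admissible in the definition of $\mu(x_n,r)$, there is $y_n\in B(x_n,r)$ with
\[ u(y_n)-u(x_n)>\dlambda(x_n,y_n). \]
Because $r<\mathrm{dist}(x_0,\partial\Omega)$, for large $n$ the sets $\overline{B}(x_n,r)$ are relatively compact in $\Omega$, so up to a further subsequence $y_n\to y_0\in\overline{B}(x_0,r)$.

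Letting $n\to\infty$, the continuity of $u$ and the joint continuity of $\dlambda$ yield in the limit $u(y_0)-u(x_0)\ge\dlambda(x_0,y_0)$. If $y_0\neq x_0$ this already gives a contradiction: the admissibility of $\lambda_1$ at $x_0$ gives $u(y_0)-u(x_0)\le d_{\lambda_1}(x_0,y_0)$, while strict monotonicity forces $d_{\lambda_1}(x_0,y_0)<\dlambda(x_0,y_0)$ (here $y_0\neq x_0$ is essential), so that
\[ \dlambda(x_0,y_0)\le u(y_0)-u(x_0)\le d_{\lambda_1}(x_0,y_0)<\dlambda(x_0,y_0). \]
This disposes of every accumulation point $y_0$ lying at positive distance from the centre.

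The main obstacle is the degenerate alternative $y_0=x_0$, in which the near-extremal points $y_n$ collapse onto the base point together with $x_n$. Here the strictness that powered the previous step is lost in the limit, since $\dlambda(x_0,x_0)=0=u(x_0)-u(x_0)$, and the one–sided comparison with cones issued from $x_0$ no longer controls $u(y_n)-u(x_n)$, because this increment also involves the value of $u$ at the \emph{moving} base point $x_n$. To treat it I would pass to the infinitesimal scale, invoking $H(x_0,Du)(x_0)=\inf_\rho\mu(x_0,\rho)\le\mu(x_0,r)<\lambda$: for a suitable small radius $\rho_0$ one has $\mu(x_0,\rho_0)<\lambda$, i.e. $u(y)-u(x_0)\le\dlambda(x_0,y)$ on $B(x_0,\rho_0)$, and the remaining task is to propagate this local control from the fixed centre $x_0$ to the nearby centres $x_n$ along the segments joining $x_n$ to $y_n$, using the uniform (in $q$) lower semicontinuity of $L_\lambda$ in $x$ (Lemma \ref{uniflsc}) together with the $1$-homogeneity of $L_\lambda$. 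I expect this local transfer across moving base points to be the delicate point of the whole proof, and precisely the step where the relation between the pointwise value $H(x_0,Du)(x_0)$ and $\mu(x_0,r)$ must be used in full.
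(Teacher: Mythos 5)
Your main line of argument is the same as the paper's: extract witnesses $y_n\in B(x_n,r)$ of the failure of an intermediate level $\lambda$, pass to the limit using the continuity of $u$ and of $(x,y)\mapsto d_\lambda(x,y)$, and conclude via the strict monotonicity of $\lambda\mapsto d_\lambda$. In the non-degenerate case $y_0\neq x_0$ your argument is complete and correct. The genuine gap is the degenerate case $y_0=x_0$, which you identify but do not close: there the strict inequality $d_{\lambda_1}(x_0,y_0)<d_\lambda(x_0,y_0)$ collapses to $0<0$ and the contradiction evaporates. (For what it is worth, the paper's own proof has exactly the same lacuna: its final step $d_{\alpha'}(x,y)>d_\alpha(x,y)$ is asserted without excluding $y=x$.)

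Moreover, the repair you sketch for this case --- using $\mu(x_0,\rho_0)<\lambda$ for small $\rho_0$ and ``propagating'' the one-sided cone condition from the fixed centre $x_0$ to the moving centres $x_n$ --- cannot succeed with only the hypotheses appearing in the statement, because in that generality the statement is false. Take $d=1$, $\Omega=(-1,1)$, $H(x,p)=|p|$ (so $d_\lambda(x,y)=\lambda|y-x|$) and $u(x)=-|x|\in W^{1,\infty}(\Omega)\cap\C(\bO)$. Then $u(y)-u(0)=-|y|\le 0$ for all $y$, so $\mu(0,r)=0$; but for $x_n=1/n$ the point $y_n=0\in B(x_n,r)$ gives $u(y_n)-u(x_n)=1/n=|y_n-x_n|$, hence $\mu(x_n,r)\ge 1$, and upper semicontinuity fails precisely through the collapse $y_n\to x_0$ that you flagged. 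Closing the degenerate case therefore requires an input beyond the definition of $\mu(\cdot,r)$ --- in practice the absolute minimality of $u$ (two-sided comparison with distance functions, which rules out this kind of strict local maximum), which is the only setting in which the lemma is later used --- and neither your sketch nor the paper's proof invokes it. So you have correctly located the delicate point, but the argument as proposed does not, and in the stated generality cannot, go through there.
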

\begin{proof} Let $x_n \to x$ and we may assume, without loss of generality that
\[ \mu(x_n, r) \to \nu.\]
We want to prove that $\mu(x,r) \geq \nu$.
Let $\alpha <\nu$ and $\alpha'= \frac{\alpha+\nu}{2}$ so that $\alpha < \alpha' < \nu$. 
For $n$ such that $\mu(x_n,r)>\alpha'$ there exists $y_n \in B(x_n,r) $ such that 
\[u(y_n)-u(x_n) > d_{\alpha'} (x_n,y_n),\]
and we may assume, up to extraction of a subsequence, that $y_n \to y \in B(x,r)$.
Using the continuity of $u$ and the properties (continuity and monotonicity) of the pseudo-distances
\begin{equation*}
u(y)-u(x) = \lim_{n \to +\infty} u(y_n)-u(x_n) \geq \lim_{n \to +\infty} d_{\alpha'} (x_n,y_n) =  d_{\alpha'} (x,y) >d_{\alpha} (x,y),
\end{equation*}
which implies $\mu(x,r) >\alpha$ for every $\alpha <\nu$.
\end{proof}
\begin{cor} The function $x \mapsto H(x,Du)(x)$ is upper semicontinuous in $\Omega$. 
\end{cor}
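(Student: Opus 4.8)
The plan is to deduce the corollary directly from Lemma \ref{muusc} together with the representation $H(x,Du)(x)=\inf_{r}\mu(x,r)$, exploiting the elementary principle that a pointwise infimum of upper semicontinuous functions is again upper semicontinuous (its sublevel sets $\{H(\cdot,Du)(\cdot)<c\}=\bigcup_{r}\{\mu(\cdot,r)<c\}$ being unions of open sets). The only point requiring care is that each function $x\mapsto\mu(x,r)$ is defined not on all of $\Omega$ but only on the open set $\{x:\ dist(x,\pO)>r\}$; this is harmless because upper semicontinuity is a local property and, for a fixed $x_{0}\in\Omega$, every sufficiently small $r$ yields a function defined on a whole neighbourhood of $x_{0}$.

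Concretely, I would fix $x_{0}\in\Omega$ and a sequence $x_{n}\to x_{0}$, and show $\limsup_{n}H(x_{n},Du)(x_{n})\le H(x_{0},Du)(x_{0})$. Given $\eps>0$, the defining infimum provides some $r_{0}$ with $r_{0}<dist(x_{0},\pO)$ and $\mu(x_{0},r_{0})<H(x_{0},Du)(x_{0})+\eps$. By continuity of $x\mapsto dist(x,\pO)$ one has $r_{0}<dist(x_{n},\pO)$ for all large $n$, so $\mu(x_{n},r_{0})$ is well defined and, since $\mu(x_{n},\cdot)$ is non-decreasing, $H(x_{n},Du)(x_{n})=\inf_{r}\mu(x_{n},r)\le\mu(x_{n},r_{0})$.

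Taking the limsup in $n$ and invoking the upper semicontinuity of $x\mapsto\mu(x,r_{0})$ from Lemma \ref{muusc} gives
\[
\limsup_{n\to\infty}H(x_{n},Du)(x_{n})\le\limsup_{n\to\infty}\mu(x_{n},r_{0})\le\mu(x_{0},r_{0})<H(x_{0},Du)(x_{0})+\eps.
\]
Since $\eps>0$ is arbitrary, the desired inequality follows, proving upper semicontinuity at $x_{0}$. No step here is a genuine obstacle: the content is entirely carried by Lemma \ref{muusc}, and the argument is just the standard ``infimum of u.s.c. is u.s.c.'' packaged so as to respect the $r$-dependent domains of definition. The one thing worth double-checking is precisely that domain issue, namely that the selected radius $r_{0}$ remains admissible along the whole sequence, which the continuity of the distance to $\pO$ guarantees.
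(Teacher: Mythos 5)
Your proof is correct and follows essentially the same route as the paper, which disposes of the corollary in one line by observing that the infimum of upper semicontinuous functions is upper semicontinuous and invoking Lemma \ref{muusc}. Your additional care about the $r$-dependent domain of $\mu(\cdot,r)$ is a legitimate (and welcome) refinement of the same argument, not a different approach.
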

\begin{proof}
This follows from the Lemma \ref{muusc} above since the $\inf$ of upper semicontinuous functions is upper semicontinuous. 
\end{proof}
\begin{prop}
	\label{punctualdeffirstineq}
Let $u\in\Winf(\Omega)\cap \C(\bO)$ be such that $u$ is differentiable at $x_{0}$. Then 
\begin{equation*}
H(x_{0},Du(x_{0}))\le H(x_{0},Du)(x_{0}).
\end{equation*}
\end{prop}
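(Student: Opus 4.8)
The plan is to reproduce, at the level of the single point $x_0$, the ``sufficiency'' half of the proof of Theorem \ref{lambdacontinuity}, but starting from the one-sided comparison encoded by $\mu(x_0,r)$ rather than from a global Lipschitz estimate. Write $\lambda:=H(x_0,Du)(x_0)=\inf_r\mu(x_0,r)$. I would fix an arbitrary $\lambda'>\lambda$ and show $H(x_0,Du(x_0))\le\lambda'$; letting $\lambda'\downarrow\lambda$ at the very end gives the claim. This two-step structure is forced on us because the infimum defining $\mu(x_0,r)$ (and the infimum over $r$) need not be attained, so one cannot work directly at $\lambda$.

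First I would convert the definition of $H(x_0,Du)(x_0)$ into a usable local inequality. Since $\lambda=\inf_r\mu(x_0,r)<\lambda'$, there is $r>0$ (with $r<dist(x_0,\partial\Omega)$) such that $\mu(x_0,r)<\lambda'$; by the definition of $\mu(x_0,r)$ as an infimum there is then some $\tilde\lambda<\lambda'$ with $u(x)-u(x_0)\le d_{\tilde\lambda}(x_0,x)$ for all $x\in B(x_0,r)$, and the monotonicity of $\lambda\mapsto d_\lambda$ (Remark \ref{propertiesofdistance}) upgrades this to
\[ u(x)-u(x_0)\le d_{\lambda'}(x_0,x)\qquad\text{for all }x\in B(x_0,r). \]

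Next I would exploit differentiability. For any fixed direction $q\in\R^d$ and $h>0$ small enough that the segment $\{x_0+thq:t\in[0,1]\}\subset B(x_0,r)$, applying the displayed inequality at $x=x_0+hq$, dividing by $h$ and sending $h\to0$ gives
\[ Du(x_0)\cdot q=\lim_{h\to0}\frac{u(x_0+hq)-u(x_0)}{h}\le\limsup_{h\to0}\frac{d_{\lambda'}(x_0,x_0+hq)}{h}. \]
Exactly as in Theorem \ref{lambdacontinuity}, estimating $d_{\lambda'}$ along the straight segment and using the $1$-homogeneity of $L_{\lambda'}(x,\cdot)$ yields $\frac1h d_{\lambda'}(x_0,x_0+hq)\le\int_0^1 L_{\lambda'}(x_0+thq,q)\,dt$, and the upper semicontinuity of $L_{\lambda'}$ in $x$ (Remark \ref{propofL}) lets me pass to the limit to obtain $Du(x_0)\cdot q\le L_{\lambda'}(x_0,q)$ for every $q$.

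Finally I would invoke the polarity characterization \eqref{charsublevels}: for every $q$ with $L_{\lambda'}(x_0,q)\le1$ the previous bound gives $Du(x_0)\cdot q\le1$, hence $\sup\{Du(x_0)\cdot q:L_{\lambda'}(x_0,q)\le1\}\le1$, which is precisely $H(x_0,Du(x_0))\le\lambda'$. Letting $\lambda'\downarrow\lambda$ closes the argument. I do not expect a genuine obstacle here, since all the analytic ingredients are already in place; the only points requiring care are the bookkeeping with the (possibly non-attained) infima --- which is why the whole argument is run for $\lambda'>\lambda$ and closed by a limit --- and checking that the segment stays inside $B(x_0,r)$ so that the local comparison and the straight-path estimate may legitimately be applied.
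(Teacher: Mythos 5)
Your argument is correct and is essentially the paper's own proof: the paper likewise works at a level strictly above the infimum (it uses $\mu(x_0,r)+\eps_r$ and sends $r\to 0$ where you use $\lambda'>\lambda$ and send $\lambda'\downarrow\lambda$), and then runs the identical chain of differentiability, the straight-segment bound on $d_{\lambda'}$, $1$-homogeneity and upper semicontinuity of $L_{\lambda'}$ in $x$, and the polarity characterization \eqref{charsublevels}. Your explicit justification of the local comparison at level $\lambda'$ via an intermediate $\tilde\lambda$ and the monotonicity of $\lambda\mapsto d_\lambda$ is a welcome clarification of a step the paper leaves implicit.
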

\begin{proof}
Let $(\eps_{r})_{r>0}$ a sequence of positive real numbers that decreases to zero. We want to show that $H(x_{0},Du(x_{0}))\le \mu(x_{0},r)+\eps_{r}$, for every $r>0$. If we fix $\mu_{\eps_{r}}:=\mu(x_{0},r)+\eps_{r} $ and we consider $q\in\Rd$ such that  $L_{\mu_{\eps_{r}}}(x_{0},q)\le 1$, by the characterization \eqref{charsublevels} it is sufficient to show that
\begin{equation*}
Du(x_{0})\cdot q \le 1.
\end{equation*}
Since $u$ is differentiable at $x_{0}$ we have:
\begin{align*}
&Du(x_{0})\cdot q=\lim_{h\to 0}\frac{u(x_{0}+hq)-u(x_{0})}{h}\le\lim_{h\to 0}\frac{1}{h}d_{\mu_{\eps_{r}}}(x_{0},x_{0}+hq)\le\\
&\le\lim_{h\to 0}\frac{1}{h}\int_{0}^{1}L_{\mu_{\eps_{r}}}(x_{0}+thq,hq)dt=\lim_{h\to 0}\int_{0}^{1}L_{\mu_{\eps_{r}}}(x_{0}+thq,q)dt=\\
& = L_{\mu_{\eps_{r}}}(x_{0},q)\leq 1,
\end{align*}
where the first inequality follows from the definition of $\mu(x_{0},r)$. The proof is then concluded.
\end{proof}
\begin{prop}
	Let $u\in\Winf(\Omega)\cap \C(\bO)$ be differentiable at $x_{0}$. 
Then 
	\begin{equation*}
	H(x_{0},Du(x_{0}))\ge H(x_{0},Du)(x_{0}).
	\end{equation*}
\end{prop}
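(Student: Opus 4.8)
The plan is to show $H(x_0,Du)(x_0)\le H(x_0,Du(x_0))$ by proving, for every $\lambda$ strictly larger than $\lambda_0:=H(x_0,Du(x_0))$, that $\mu(x_0,r)\le\lambda$ for all sufficiently small $r$; letting $\lambda\downarrow\lambda_0$ and recalling $H(x_0,Du)(x_0)=\inf_r\mu(x_0,r)$ then gives the claim. Write $p_0:=Du(x_0)$. The first step is to produce a strict linear lower bound for $L_\lambda(x_0,\cdot)$: since $H(x_0,p_0)\le\lambda_0$, the definition of $L_{\lambda_0}$ gives $p_0\cdot q\le L_{\lambda_0}(x_0,q)$ for every $q$, and since $\lambda>\lambda_0$, property \eqref{propD} (with $\mu=\lambda_0$) yields $\rho>0$ with $\{H(x_0,\cdot)\le\lambda_0\}+B(0,\rho)\subset\{H(x_0,\cdot)\le\lambda\}$, hence $L_\lambda(x_0,q)\ge L_{\lambda_0}(x_0,q)+\rho|q|\ge p_0\cdot q+\rho|q|$ for all $q$.

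Next I would localize this bound around $x_0$. By Lemma~\ref{uniflsc} (applied on the unit sphere and extended by the $1$-homogeneity of $L_\lambda(y,\cdot)$) there is $\delta>0$ such that $L_\lambda(y,q)\ge L_\lambda(x_0,q)-\tfrac{\rho}{2}|q|$ whenever $|y-x_0|\le\delta$, so that $L_\lambda(y,q)\ge p_0\cdot q+\tfrac{\rho}{2}|q|$ for all such $y$ and all $q$. I then estimate $d_\lambda(x_0,x)$ from below for $x\in B(x_0,r)$ by splitting the competitor curves $\gamma\in\mathsf{path}(x_0,x)$ into two classes. If $\gamma$ stays in $\overline{B(x_0,\delta)}$, integrating the previous inequality along $\gamma$ gives $\int_0^1 L_\lambda(\gamma,\dot\gamma)\,dt\ge p_0\cdot(x-x_0)+\tfrac{\rho}{2}\,\ell(\gamma)\ge p_0\cdot(x-x_0)+\tfrac{\rho}{2}|x-x_0|$, since $\int_0^1 p_0\cdot\dot\gamma\,dt=p_0\cdot(x-x_0)$ and the Euclidean length satisfies $\ell(\gamma)\ge|x-x_0|$. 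If instead $\gamma$ exits $B(x_0,\delta)$, then using the lower bound $L_\lambda\ge c|q|$ from \eqref{boundsLlambda} and the fact that $\gamma$ must travel Euclidean length at least $\delta$ before reaching $\partial B(x_0,\delta)$, its cost is at least $c\,\delta$.

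The last step is to absorb the differentiability error. By differentiability of $u$ at $x_0$ there is, for each small $r$, a quantity $\eps(r)\to0$ with $|u(x)-u(x_0)-p_0\cdot(x-x_0)|\le\eps(r)|x-x_0|$ for $|x-x_0|\le r$. Choosing $r\le\delta$ small enough that $\eps(r)\le\tfrac{\rho}{2}$ and $(|p_0|+\tfrac{\rho}{2})\,r< c\,\delta$, the first class of curves yields $\int_0^1 L_\lambda(\gamma,\dot\gamma)\,dt\ge p_0\cdot(x-x_0)+\tfrac{\rho}{2}|x-x_0|\ge u(x)-u(x_0)$, while the second class yields cost $\ge c\,\delta>(|p_0|+\tfrac{\rho}{2})r\ge u(x)-u(x_0)$. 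Taking the infimum over all curves gives $u(x)-u(x_0)\le d_\lambda(x_0,x)$ for every $x\in B(x_0,r)$, i.e. $\mu(x_0,r)\le\lambda$, which completes the argument.

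I expect the main obstacle to be the second class of curves: because $d_\lambda(x_0,x)$ is an infimum over all paths in $\Omega$ (which may wander far from $x_0$), one cannot simply linearize $L_\lambda$ near $x_0$ along every competitor. The point is that any such long path is expensive — its cost is bounded below by $c\,\delta$ — and hence irrelevant once $x$ is close enough to $x_0$; making this quantitative, together with the delicate choice of $r$ balancing $\eps(r)$ against the margin $\tfrac{\rho}{2}$ coming from property \eqref{propD}, is where the care is needed.
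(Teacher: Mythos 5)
Your argument is correct, and it takes a genuinely different route from the paper's. You argue ``from above'': fixing $\lambda>\lambda_0:=H(x_0,Du(x_0))$, you build a strict cone-type lower bound $L_\lambda(y,q)\ge Du(x_0)\cdot q+\tfrac{\rho}{2}|q|$ on a whole neighbourhood of $x_0$ (combining property \eqref{propD} with Lemma \ref{uniflsc} and $1$-homogeneity), integrate it along competitor curves split according to whether they leave $B(x_0,\delta)$, and absorb the differentiability error to get $u(x)-u(x_0)\le d_\lambda(x_0,x)$ for all $x$ in a small ball, whence $\mu(x_0,r)\le\lambda$. The paper instead argues ``from below'': for $\lambda$ slightly less than $\mu:=H(x_0,Du)(x_0)$ it picks points $x_r$ violating the Lipschitz inequality, blows up to extract a unit direction $q$ with $x_{s_n}-x_0\sim s_nq$, and uses near-optimal curves, Jensen's inequality for the convex map $L_{\mu-\eps}(x_0,\cdot)$ and Lemma \ref{uniflsc} to conclude $Du(x_0)\cdot q\ge L_{\mu-\eps}(x_0,q)$, which forces $H(x_0,Du(x_0))\ge\mu-\eps$. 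Your approach buys a more self-contained and quantitative proof: it avoids extracting a limiting direction, avoids Jensen's inequality, and handles explicitly the competitors that wander away from $x_0$ (which the paper controls only implicitly through the bound $|\gamma_n(t)-x_0|\le Ms_n/\alpha$ on near-geodesics); moreover the final step of the paper's proof, passing from $Du(x_0)\cdot q\ge L_{\mu-\eps}(x_0,q)$ to $H(x_0,Du(x_0))\ge\mu-\eps$, silently uses property \eqref{propD} again, whereas in your version the role of \eqref{propD} (producing the margin $\rho$) is explicit from the start. The price is the slightly more delicate bookkeeping in the choice of $r$ balancing $\eps(r)$, $\rho$ and the exit cost $c\,\delta$ from \eqref{boundsLlambda}, which you have carried out correctly.
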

\begin{proof}
For every $r>0$ and such that $B(x_0,r) \subset \subset \Omega$  denote  by $\mu_r= \mu(x_0,r)$ and by $\mu:= H(x_{0},Du)(x_{0})$. Let $\eps>0$ and $x_r \in B(x_0,r)$ be such that 
$$u(x_r)-u(x_0)\ge d_{\mu_r-\eps} (x_0,x_r).$$
Consider a sequence $\frac{x_{r_n}-x_0}{s_n}$, where $s_{n}:=|x_{r_{n}}-x_{0}|$. By compactness up to the choice of a subsequence we have that
$$\frac{x_{r_n}-x_0}{s_n}\to q, $$ 
for some unitary vector $q$. Let $(\eps_n)$ be sequence of real numbers decreasing to $0$. For every $n\ge 0$, we denote by $\mu_n=\mu_{r_n}$. Let $\gamma_{n}\in\mathsf{path}(x_0,x_{r_n})$ such that (up to an increasing reparametrization) $d_{\mu_{n}-\eps}(x_{0},x_{r_{n}})\ge\int_0^{s_n} L_{\mu_n-\eps} (\gamma_n, \dot \gamma_n ) dt-\frac{\eps_n}{2}$, Then 
\begin{equation*}
\begin{split}
\frac{u(x_{r_n})-u(x_0)}{s_n} &\ge  \frac{d_{\mu_n-\eps}(x_0,x_{r_n})}{s_n}\\ &\ge \frac{1}{s_n}\int_0^{s_n} L_{\mu_n-\eps} (\gamma_n, \dot \gamma_n ) dt-\frac{\eps_n}{2} \\
&\geq  \frac{1}{s_n}\int_0^{s_n} L_{\mu-\eps} (\gamma_n, \dot \gamma_n ) dt -\frac{\eps_n}{2}  \\& \geq  \frac{1}{s_n}\int_0^{s_n} L_{\mu-\eps }(x_0, \dot \gamma_n ) dt - \frac{\eps_n}{2}-\frac{\eps_n}{2} \\
& \geq L_{\mu-\eps} (x_0, \frac{1}{s_n} \int_0^{s_n} \dot \gamma _n dt ) - \eps_n \stackrel{\liminf}{\to} L_{\mu-\eps} (x_0,q),
\end{split}
\end{equation*}
obtaining that $Du(x_{0})\cdot q\ge L_{\mu-\eps}(x_{0},q)$. The inequality in the third line above follows from Lemma \ref{uniflsc} with $\eta=\frac{\eps_n}{2}$ 
 It follows by the definition of $L_{\lambda}$ that $$H(x_{0},Du(x_{0}))\ge \mu -\eps.$$
The thesis follows by the arbitrariness of $\eps$.
\end{proof}
\begin{cor} For all $u\in W^{1,\infty} (\Omega) \cap \C(\overline \Omega)$
	\[
	\esssup{\Omega} H(x,Du(x)) =\esssup{\Omega} H(x,Du)(x).
	\]
\end{cor}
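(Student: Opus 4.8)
The plan is to read the result off directly from the two immediately preceding propositions, so the genuine work is already done. First I would observe that those two propositions, taken together, yield the pointwise identity $H(x_0,Du(x_0))=H(x_0,Du)(x_0)$ at every point $x_0\in\Omega$ at which the classical gradient $Du(x_0)$ exists: Proposition \ref{punctualdeffirstineq} supplies the inequality $\le$, and the proposition following it supplies the reverse inequality $\ge$, so the two quantities coincide wherever $u$ is differentiable.

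Next I would invoke Rademacher's theorem. Since $u\in\Winf(\Omega)$ is locally Lipschitz, it is differentiable at Lebesgue-almost every point of $\Omega$. Consequently the two functions $x\mapsto H(x,Du(x))$, defined only almost everywhere, and $x\mapsto H(x,Du)(x)$, defined everywhere by the construction of this section, agree outside a Lebesgue-null set $N\subset\Omega$. Because the essential supremum taken with respect to Lebesgue measure is unchanged when a function is modified on a set of measure zero, the two essential suprema in the statement coincide, which is precisely the claimed equality.

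The only point that requires a moment of care — rather than a real obstacle — is to make explicit that both sides of the asserted identity are essential suprema against Lebesgue measure, so that the behaviour of the everywhere-defined function $H(x,Du)(x)$ on the null set $N$, where $u$ may fail to be differentiable and where $H(x,Du(x))$ is not even defined, is irrelevant to either side. Once this is noted, the almost-everywhere agreement established above immediately upgrades to equality of the two essential suprema, and no further estimate is needed.
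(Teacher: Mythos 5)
Your proposal is correct and coincides with the paper's (implicit) argument: the corollary is stated without proof precisely because the two preceding propositions give $H(x_0,Du(x_0))=H(x_0,Du)(x_0)$ at every point of differentiability, Rademacher's theorem makes this an almost-everywhere identity, and the essential supremum ignores the remaining null set. Nothing further is needed.
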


\section{The attainment set and its minimality}
\begin{prop}\label{chain}
Let $u$ be an absolute minimizer for the problem \eqref{prob}. Let $x_0$ be such that $H(x_0, Du)(x_0)=\esssup{x\in \Omega} H(x,Du(x)):=\mu$ and let $\alpha$ and $M$ be such that $B(0, \alpha) \subset \{H(x, \cdot) \leq \lambda\} \subset B(0, M)$ for all $\lambda$ in a neighborhood of $\mu$. Then there exists $y$ such that $d_\mu (x_0,y)=\frac{\alpha^2}{2M} d (x_0, \partial \Omega)$ such that 
$u(y)-u(x_0)= d_\mu (x_0,y)$ and $H (y, Du)(y)= \mu$. 
\end{prop}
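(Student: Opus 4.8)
The plan is to convert the attainment condition at $x_0$ into a statement about the maximal slope of $u$ measured in the pseudo-distance $d_\mu$, and then to propagate that maximal slope outward until it is realized at $d_\mu$-distance exactly $R:=\frac{\alpha^2}{2M}d(x_0,\partial\Omega)$. First I would record what $x_0\in\A(u)$ gives. Since $\esssup{\Omega}H(\cdot,Du)=\mu$, Theorem \ref{lambdacontinuity} yields the global bound $u(x)-u(x_0)\le d_\mu(x_0,x)$, so $\lambda=\mu$ is admissible in the definition of $\mu(x_0,r)$ and $\mu(x_0,r)\le\mu$; on the other hand $\inf_r\mu(x_0,r)=H(x_0,Du)(x_0)=\mu$ together with the monotonicity of $r\mapsto\mu(x_0,r)$ forces $\mu(x_0,r)\ge\mu$. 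Hence $\mu(x_0,r)=\mu$ for every admissible $r$. Exactly as in the proof that $H(x_0,Du(x_0))\ge H(x_0,Du)(x_0)$, choosing $\lambda_k\uparrow\mu$ and $r=1/k$ produces points $x_k\to x_0$ with $u(x_k)-u(x_0)>d_{\lambda_k}(x_0,x_k)$, and then the left-continuity estimate \eqref{continuityd} combined with \eqref{equivalenceintrinsicdistance} gives $\frac{u(x_k)-u(x_0)}{d_\mu(x_0,x_k)}\to1$ with $r_k:=d_\mu(x_0,x_k)\to0$. Thus the cone $u(x_0)+d_\mu(x_0,\cdot)$, which dominates $u$ everywhere, is touched by $u$ in the limit along a sequence approaching $x_0$.

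Next I would verify that $R$ is a legitimate radius: by \eqref{equivalenceintrinsicdistance} any $x$ with $d_\mu(x_0,x)\le R$ satisfies $|x_0-x|_\Omega\le R/\alpha=\frac{\alpha}{2M}d(x_0,\partial\Omega)<d(x_0,\partial\Omega)$, so the closed $d_\mu$-ball $\overline B^\mu_R:=\{d_\mu(x_0,\cdot)\le R\}$ is compactly contained in $\Omega$ and $\esssup{\overline B^\mu_R}H(\cdot,Du)=\mu$. The core of the argument is then an increasing-slope (comparison-with-cones) estimate: the quantity $g(r):=\frac1r\max\{u(x)-u(x_0):d_\mu(x_0,x)=r\}$ is non-decreasing on $(0,R]$. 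Granting this, the global bound gives $g\le1$, while the first step gives $g(r_k)\to1$ along $r_k\to0$; since $g$ is monotone, its limit at $0^+$ equals its infimum, so $g\equiv1$ on $(0,R]$. In particular $\max\{u(x)-u(x_0):d_\mu(x_0,x)=R\}=R$, and any maximizer $y$ satisfies $d_\mu(x_0,y)=R$ and $u(y)-u(x_0)=R=d_\mu(x_0,y)$, which is the first half of the statement.

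To obtain $H(y,Du)(y)=\mu$ I would propagate the touching along a geodesic. Choose any $d_\mu$-geodesic $\gamma:[0,R]\to\overline B^\mu_R$ from $x_0$ to $y$ (existence follows from \eqref{boundsLlambda} and a standard compactness argument). For $\gamma(s)$ at $d_\mu$-distance $r$ from $x_0$ one has $u(\gamma(s))-u(x_0)\le r$ and $u(y)-u(\gamma(s))\le R-r$; adding these and using $u(y)-u(x_0)=R$ forces equality in both, so $u$ agrees with the cone along $\gamma$ and its increments are additive along $\gamma$. Now suppose $H(y,Du)(y)=\mu'<\mu$. By upper semicontinuity of $x\mapsto H(x,Du)(x)$ there exist $\rho>0$ and $\lambda<\mu$ with $\esssup{B(y,\rho)}H(\cdot,Du)\le\lambda$, so Theorem \ref{lambdacontinuity} applied on $B(y,\rho)$ gives $u(z)-u(w)\le d_\lambda(w,z)$ for $w,z\in B(y,\rho)$. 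Taking $w=\gamma(R-\eps)$ and $z=y$ with $\eps$ small enough that $\gamma(R-\eps)\in B(y,\rho)$, the geodesic additivity gives $u(y)-u(w)=\eps=d_\mu(w,y)$, while the strict monotonicity of $\lambda\mapsto d_\lambda$ (property \eqref{propD}) gives $d_\lambda(w,y)<d_\mu(w,y)$; the contradiction forces $H(y,Du)(y)=\mu$.

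I expect the increasing-slope estimate of the second paragraph to be the main obstacle. The classical argument compares $u$ on $d_\mu$-annuli with the scaled cone $u(x_0)+g(r)\,d_\mu(x_0,\cdot)$, but such a sub-unit cone is not one of the distance functions covered by the comparison principle of Remark \ref{compdist}, and (since the vertex $x_0$ must be excised) the unscaled comparison only reproduces the already-known bound $u\le u(x_0)+d_\mu(x_0,\cdot)$. The real work is therefore to license sub-unit cones as admissible barriers, either by proving directly from absolute minimality that $k\,d_\mu(x_0,\cdot)$ with $0<k\le1$ may be compared against $u$, or by approximating through the family $\{d_\lambda\}_{\lambda<\mu}$ and exploiting the left-continuity \eqref{continuityd} and strict monotonicity \eqref{propD}. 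The explicit radius $\frac{\alpha^2}{2M}d(x_0,\partial\Omega)$ should then emerge as the quantitative output of this barrier estimate under the constraint that $\overline B^\mu_R$ remain compactly inside $\Omega$.
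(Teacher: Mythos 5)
Your first paragraph and your closing diagnosis of $H(y,Du)(y)=\mu$ are sound, but the proof as written has a genuine gap exactly where you flag it: the monotonicity of $g(r)=\frac1r\max\{u(x)-u(x_0):d_\mu(x_0,x)=r\}$ is asserted and never proved, and everything in your second paragraph (hence the existence of the maximizer $y$ with $u(y)-u(x_0)=d_\mu(x_0,y)=R$, on which your third paragraph depends) rests on it. As you yourself observe, the only comparison tool available in the paper (Remark \ref{compdist}) licenses barriers of the form $c+d_\lambda(x_0,\cdot)$, not sub-unit cones $u(x_0)+k\,d_\mu(x_0,\cdot)$ with $k<1$, and the unscaled $d_\mu$-cone only reproduces the bound $g\le 1$. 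Announcing that ``the real work is to license sub-unit cones'' and offering two possible routes without carrying either out does not close the argument; the increasing-slope estimate is the heart of the proposition, not a technical afterthought.

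The paper closes precisely this gap by taking the second of your two suggested routes, but organized so that no sub-unit cone and no monotonicity of $g$ is ever needed. Fix $\lambda<\mu$ and set $V_R=\{z:d_\lambda(x_0,z)<R\}$. Since $x_0\in\A(u)$ and $\lambda<\mu$, there are points arbitrarily close to $x_0$ where $u(z)-d_\lambda(x_0,z)>u(x_0)$; hence the maximum $a$ of $u-d_\lambda(x_0,\cdot)$ over $\overline V_R$ strictly exceeds its value at $x_0$, and the comparison with the admissible barrier $a_b+d_\lambda(x_0,\cdot)$ on $V_R\setminus\{x_0\}$ (Remark \ref{compdist}) forces $a$ to be attained at some $y_\lambda\in\partial V_R$. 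Picking points $y_\eps$ on a $d_\lambda$-geodesic from $x_0$ to $y_\lambda$ just inside $V_R$ gives $u(y_\lambda)-u(y_\eps)\ge d_\lambda(y_\eps,y_\lambda)$, whence $\mu(y_\eps,\delta)>\lambda-\delta$, and the upper semicontinuity of $x\mapsto\mu(x,\delta)$ (Lemma \ref{muusc}) yields $H(y_\lambda,Du)(y_\lambda)\ge\lambda$. Finally $\lambda_n\nearrow\mu$, a subsequential limit $y$ of the $y_{\lambda_n}$, the upper semicontinuity of $H(\cdot,Du)(\cdot)$, and the left-continuity of $\lambda\mapsto d_\lambda$ (Proposition \ref{continuitydlambda}) deliver simultaneously $H(y,Du)(y)=\mu$, $u(y)-u(x_0)=d_\mu(x_0,y)$ and $d_\mu(x_0,y)=R$. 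If you want to salvage your structure, replace the unproved monotonicity of $g$ by this fixed-$\lambda$ boundary-maximum argument; your third paragraph then becomes an alternative (and slightly cleaner) way to conclude $H(y,Du)(y)=\mu$ than the paper's, since it only uses upper semicontinuity of $H(\cdot,Du)(\cdot)$ and the strict monotonicity of $\lambda\mapsto d_\lambda$ rather than the auxiliary points $y_\eps$.
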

\begin{proof}  Let $\lambda < \mu$, by definition of $\mu$, for all $r>0$ $\exists y_r \in B(x_0,r)$ such that 
\[ u(y)-u(x_0) >d_ \lambda (x_0,y) 
\]
or, equivalently, for every open set $A \subset \subset \Omega$ $\exists y_A \in A$ such that 
\[u(y_A)- d_\lambda (x_0, y_A) > u(x_0).
\] 
Let $R= \frac{\alpha^2}{2M}d (x_0, \partial \Omega)$. Consider the open set $V_R=\{z \in \Omega \ : \ d_\lambda(x_0,z) <R\}$. By the choice of $R$, $V_R \subset \subset \Omega$\footnote{In fact if $z\in V_R$ then $|x_0-z| \leq \frac{1}{\alpha} d_\lambda (x_0, z) < \frac{1}{\alpha}R = \frac{\alpha}{2M}d (x_0, \partial \Omega)<d (x_0, \partial \Omega)$.}. Let 
\[ a= \max_{\overline V_R} \{u(z)-  d_\lambda (x_0,z)\}.
\]
We claim that 
\[ a= \max_{\partial V_R} \{u(z)-  d_\lambda (x_0,z)\}.
\]
In fact, let 
\[ a_b= \max_{\partial (V_R\setminus \{x_0\})} \{u(z)-  d_\lambda (x_0,z)\},
\]
we have that the distance function
\[ a_b + d_\lambda (x_0, z)\]
satisfies  
\[ a_b + d_\lambda (x_0, z)\geq u(z) \ \ \ \mbox{on} \ \ \  \partial (V_R \setminus \{x_0\}), \]
and then, by the minimality of $u$  (see Remark \ref{compdist})
\[ a_b + d_\lambda (x_0, z)\geq u(z) \ \ \ \mbox{on} \ \ \  \overline V_R . \]
Moreover, by the choice of $x_0$ and the fact that $\lambda<\mu$, the maximum in $a_b$ can not be reached in $x_0$. Thus we have $a=a_b$.
Let $y_\lambda\in \partial V_R$  be such that $a=u(y_\lambda)- d_\lambda (x_0,y)$ let $\eps <<1$ and let 
$y_\eps \in B(y_\lambda, \eps)$ be a point of the $d_\lambda-$geodesic between $x_0$ and $y_\lambda$ (which exists by Proposition \ref{a3} in the appendix). By the definition of $V_R$, $y_\eps \in V_R$ so that 
\begin{equation*}
\begin{array}{ll} 
u(y_\eps) -d_\lambda (x_0,y_\eps) & \leq a \leq u(y_\lambda)-d_\lambda (x_0,y_\lambda) \\
&= u(y_\lambda)-d_\lambda (x_0,y_\eps)-d_\lambda (y_\eps, y_\lambda).
\end{array}
\end{equation*}
We obtain 
\[ u(y_\lambda)-u(y_\eps) \geq d_\lambda (y_\eps, y_\lambda),
\]
and then for small $\delta >0$ 
\[ \mu(y_\eps, \eps) > \lambda -\delta 
\]
and when $\eps < \delta $ 
\[ \mu (y_\eps, \delta) \geq \mu (y_\eps,\eps) > \lambda -\delta. 
\]
By the upper semicontinuity (see Lemma \ref{muusc}) we have for fixed $\delta$ 
\[ \mu (y_\lambda, \delta) \geq \limsup_{\eps \to 0} \mu(y_\eps, \delta) \geq \lambda -\delta, \]
and letting $\delta \to 0$, 
\[H(y_\lambda, Du)(y_\lambda) \geq \lambda.\]
Consider $\lambda_n \nearrow \mu$ and the corresponding $y_{\lambda_n} \in \partial V_{R_n}$ as constructed above. 
Up to subsequences $y_{\lambda_n} \to y$ and, by the upper semicontinuity of $H(x, Du)(x)$ we obtain
\[H(y, Du)(y) \geq \mu.\]
Moreover
\[ u(y)-u(x_0)= \lim_{n \to \infty} u(y_{\lambda_n}) -u(x_0) = \lim_{n \to \infty} d_{\lambda_n} (x_0,y_n) = d_\mu (x_0,y),\]
where the last equality is due to Lemma \ref{continuitydlambda}.
\end{proof}
\begin{rmk}\label{remarksequence}
We observe that from Proposition \ref{chain} one can infer that for every $V\subset\subset\Omega$ that contains $x_{0}$, there exists $y\in\partial V$ such that $H(y,Du)(y)=\mu$.
\end{rmk}
\begin{thm}\label{sequence}
Let $u$ be an absolute minimizer for the problem \eqref{prob} and let $x_{0}\in \Omega$ be such that
\begin{equation}\label{valoremax}
H(x_{0},Du)(x_{0})=\esssup{x\in \Omega} H(x,Du(x)):=\mu.
\end{equation}
Then there exist $x_{+\infty}, x_{-\infty}\in\partial \Omega$, such that
\[
u(x_{+\infty})-u(x_{-\infty})=d_{\mu} (x_{-\infty},x_{+\infty}),
\]
and 
\[ d_{\mu} (x_{-\infty},x_{+\infty})=d_{\mu} (x_{-\infty},x_0)+d_{\mu} (x_0,x_{+\infty}).\]
\end{thm}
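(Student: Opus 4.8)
The plan is to iterate Proposition \ref{chain} so as to build a bi-infinite geodesic chain through $x_{0}$ on which $u$ grows at the maximal rate $\mu$, and then to read off the two boundary points as its endpoints. Since any point $y$ produced by Proposition \ref{chain} again satisfies $H(y,Du)(y)=\mu=\esssup{\Omega}H(x,Du(x))$, the hypothesis of that proposition is reproduced at $y$ and the construction can be repeated. Writing $c:=\frac{\alpha^{2}}{2M}$ (a constant \emph{independent} of the base point, since by \eqref{boundsLlambda} and Remark \ref{remarkpropr} one may take $\alpha,M$ uniform in $x$), I would keep $x_{0}$ as given and define inductively $x_{k+1}$ to be the point $y$ furnished by Proposition \ref{chain} applied at $x_{k}\in\Omega$, so that for every $k\ge0$
\begin{equation*}
u(x_{k+1})-u(x_{k})=d_{\mu}(x_{k},x_{k+1})=c\,d(x_{k},\partial\Omega),\qquad H(x_{k},Du)(x_{k})=\mu.
\end{equation*}

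Next I would establish convergence of this forward chain to a boundary point. Summing the first identity telescopes to $u(x_{N})-u(x_{0})=\sum_{k=0}^{N-1}d_{\mu}(x_{k},x_{k+1})$; since $u\in\C(\bO)$ is bounded on the compact set $\bO$, the series $\sum_{k}d_{\mu}(x_{k},x_{k+1})$ converges. Hence $d_{\mu}(x_{k},x_{k+1})\to0$, which by the third identity forces $d(x_{k},\partial\Omega)\to0$, and by the equivalence \eqref{equivalenceintrinsicdistance} the increments $|x_{k+1}-x_{k}|$ are summable, so $(x_{k})$ is Cauchy and converges to some $x_{+\infty}\in\bO$ with $x_{+\infty}\in\partial\Omega$ because $d(x_{k},\partial\Omega)\to0$. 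To identify the limiting geodesic I would combine the telescoped identity with Theorem \ref{lambdacontinuity} (which gives $u(x_{N})-u(x_{0})\le d_{\mu}(x_{0},x_{N})$, as $H(\cdot,Du)\le\mu$ a.e.) and the triangle inequality $d_{\mu}(x_{0},x_{N})\le\sum_{k=0}^{N-1}d_{\mu}(x_{k},x_{k+1})$ to deduce $d_{\mu}(x_{0},x_{N})=u(x_{N})-u(x_{0})$ for every $N$; passing to the limit then yields $d_{\mu}(x_{0},x_{+\infty})=u(x_{+\infty})-u(x_{0})$.

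The backward chain comes from the symmetric version of Proposition \ref{chain}: using the reverse distance function $z\mapsto-d_{\mu}(z,x_{0})$ and the reverse comparison recorded in Remark \ref{compdist}, one obtains at each maximal point a predecessor $z$ with $u(x_{0})-u(z)=d_{\mu}(z,x_{0})=c\,d(x_{0},\partial\Omega)$ and $H(z,Du)(z)=\mu$. Iterating and arguing exactly as above (boundedness of $u$ from below, \eqref{equivalenceintrinsicdistance}, Theorem \ref{lambdacontinuity} and the triangle inequality) produces $x_{-\infty}\in\partial\Omega$ with $d_{\mu}(x_{-\infty},x_{0})=u(x_{0})-u(x_{-\infty})$. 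Finally I would concatenate: adding the two identities gives $d_{\mu}(x_{-\infty},x_{0})+d_{\mu}(x_{0},x_{+\infty})=u(x_{+\infty})-u(x_{-\infty})$, while Theorem \ref{lambdacontinuity} gives $u(x_{+\infty})-u(x_{-\infty})\le d_{\mu}(x_{-\infty},x_{+\infty})$ and the triangle inequality (legitimate because the intermediate point $x_{0}$ lies in $\Omega$) gives $d_{\mu}(x_{-\infty},x_{+\infty})\le d_{\mu}(x_{-\infty},x_{0})+d_{\mu}(x_{0},x_{+\infty})$; this chain of inequalities collapses to equalities, which are precisely the two assertions of the theorem.

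I expect the main obstacle to be the passage to the limit in the geodesic identities, namely justifying that $d_{\mu}(x_{0},x_{N})=u(x_{N})-u(x_{0})$ survives as $N\to\infty$ up to the boundary, where $d_{\mu}$ is only defined through the $\liminf$ procedure of Definition \ref{distance} and need not be continuous without regularity of $\partial\Omega$. The safe route is to use that definition to get $d_{\mu}(x_{0},x_{+\infty})\le\liminf_{N}d_{\mu}(x_{0},x_{N})=u(x_{+\infty})-u(x_{0})$ and to invoke Theorem \ref{lambdacontinuity} for the reverse inequality, thereby avoiding any appeal to two-sided continuity of $d_{\mu}$. A secondary point to handle carefully is the uniformity of the constant $c=\frac{\alpha^{2}}{2M}$ along the whole chain, since it is exactly this uniformity that drives the summability argument and hence the convergence to a single boundary point.
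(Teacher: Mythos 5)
Your proposal is correct and follows essentially the same route as the paper: iterate Proposition \ref{chain} (whose conclusion $H(y,Du)(y)=\mu$ regenerates its own hypothesis) to build forward and backward chains with $u(x_{k+1})-u(x_k)=d_\mu(x_k,x_{k+1})$, use boundedness of $u$ on $\overline\Omega$ to force $d(x_k,\partial\Omega)\to0$ and convergence to boundary points, and close with the triangle inequality plus Theorem \ref{lambdacontinuity}. Your handling of the limit at the boundary via the $\liminf$ definition of $d_\mu$, and of the reverse inequality via Theorem \ref{lambdacontinuity}, is in fact slightly more explicit than the paper's own write-up, but it is the same argument.
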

\begin{proof}
We first claim that there exists $x_{+\infty}\in\partial \Omega$, such that 
\begin{equation}
\label{firstclaim}
u(x_{+\infty})-u(x_{0})\ge d_{\mu}(x_{0},x_{+\infty}).
\end{equation}
Starting from $x_0$ and following Proposition \ref{chain} we build a sequence $(x_n) \subset \Omega$ such that 
for all $n \in \N$ 
\[\frac{\alpha^2}{2M^2}d (x_n, \partial \Omega)= \frac{d_\mu (x_n, x_{n+1})}{M} \leq  |x_{n+1}-x_n|  \leq \frac{d_\mu (x_n, x_{n+1})}{\alpha} = \frac{\alpha}{2M}d (x_n, \partial \Omega) \]
and
\[ u(x_{n+1})-u(x_n)=d_\mu (x_n, x_{n+1}).\]
From this it follows that for any $n\in \N$ 
\[
u(x_n)-u(x_0) =\sum_{i=0}^{n-1} d_\mu (x_i, x_{i+1}) \geq \sum_{i=0}^{n-1} \frac{\alpha^3}{2M^2}d (x_i, \partial \Omega)
\]
Since $u$ is continuous in $\overline \Omega$  both series on the right of the equation above converge and then 
\[ \lim_{n \to \infty} d (x_n, \partial \Omega) =0,
\]
and $(x_n)$ is a Cauchy sequence converging to some point $x_{+\infty} \in \partial \Omega$. 
For this point it holds 
\begin{equation*}
\begin{split}
u(x_{+\infty})-u(x_0)= \lim_{n \to \infty} u(x_n)-u(x_0) = \lim_{n \to +\infty} \sum_{i=0}^{n-1} d_\mu (x_i, x_{i+1}) \\
\geq \lim_{n \to +\infty} d_\mu (x_0, x_n) = d_\mu (x_0, x_{+\infty}).
\end{split}
\end{equation*}
In a similar way we can find $x_{-\infty}\in\partial \Omega$, such that 
\begin{equation*}
u(x_{0})-u(x_{-\infty})\ge d_{\mu}(x_{-\infty},x_{0}),
\end{equation*}
and so we get
\begin{equation*}
u(x_{+\infty})-u(x_{-\infty})\ge d_{\mu}(x_{-\infty},x_{+\infty}).
\end{equation*}
One also deduces that 
\begin{align*}
	&d_\mu(x_{-\infty},x_{0})+d_\mu(x_{0},x_{+\infty})\\&=u(x_{+\infty})-u(x_{0})+u(x_{0})-u(x_{-\infty})\\&=u(x_{+\infty})-u(x_{-\infty})=d_\mu (x_{-\infty},x_{+\infty}),
	\end{align*}
	that is $x_{0}$ belongs to a geodesics for $d_\mu$ connecting $x_{-\infty}$ to $x_{+\infty}$.

\end{proof}
In the following example we show that assumption \eqref{propE} on the functional $H$ and the consequent left continuity of the map $\lambda\mapsto d_{\lambda}$ (Prop. \ref{continuitydlambda}), are essential for the validity of Proposition \ref{chain} and the consequent proof of the Theorem \ref{sequence}.
\begin{exa}
	Let's take $\Omega=B(0,2)$ and
	\begin{equation*}
	H(x,p)=\begin{cases}
	|p| \ \ \ \, \ \ \ \ \mbox{if} \ |p|<\frac{1}{2}\\
	\frac{1}{2} \ \ \ \ \ \ \ \ \ \mbox{if} \ \frac{1}{2}\le|p|\le\frac{3}{4}\\
	|p|-\frac{1}{4} \ \ \mbox{if} \ |p|>\frac{3}{4 }
	\end{cases}
	.
	\end{equation*}
	Let $u: B(0,2)\to\mathbb{R}$ be such that $u(x)=\alpha\cdot x$, where $\frac{1}{2}<|\alpha|<\frac{3}{4}$. Then $u$ is an absolute minimizer for $H$ and $H(x,Du(x))=\esssup{x\in\Omega}H(x, Du(x))=\frac{1}{2}$ for all $x\in\Omega$. \\ Calling $x_{0}=0$ and taking $V=B(0,1)$ we have that, for any $y\in\partial V$, 
	\begin{equation*}
	u(y)-u(x_{0})=\alpha\cdot y\le |\alpha|<\frac{3}{4}.
	\end{equation*}
	Moreover  
	\begin{equation*}
	   L_{\frac{1}{2}}(x,q)=\frac{3}{4}|q| \quad \text{for any} x\in\Omega
	\end{equation*}
	and 
	\begin{equation}
	d_{\frac{1}{2}}^{V}(x_{0},y)=\inf\left\{ \frac{3}{4}\int_{0}^{1}  |\dot{\gamma}(t)|dt    \right\}=\frac{3}{4}\int_{0}^{1} |y|dt=\frac{3}{4}
	\end{equation}
	That is
	\begin{equation*}
	\max_{x\in \partial V}\{u(y)-d_{\frac{1}{2}}^V(x,y)\}<0.
	\end{equation*}
\end{exa}

We finally prove the minimality property announced at the end of the introduction  
	\begin{thm} Let $\Omega$ be a bounded open set of $\mathbb{R}^{d}$ and $u\in W^{1,\infty}(\Omega)\cap \C (\overline{\Omega})$
be an absolute minimizer for \eqref{prob} and let $v$ be any other minimizer for \eqref{prob}, then 
\begin{equation}
\A(u) \subset \A(v).
\end{equation}
Moreover $\A(u)$ is the union of Lipschitz curves of minimal length for $d_\mu$ where $\mu$ is the minimal value in \eqref{prob}. 
\end{thm}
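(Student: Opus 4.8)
The plan is to exhibit, through each point of $\A(u)$, a single minimal $d_{\mu}$-geodesic reaching the boundary, and to exploit that every minimizer of \eqref{prob} is forced to grow at the maximal rate along it. First recall that any minimizer $w$ satisfies $w=g$ on $\pO$ and $\esssup{\Omega}H(\cdot,Dw)=\mu$, so Theorem \ref{lambdacontinuity} gives $w(b)-w(a)\le d_{\mu}(a,b)$ for all $a,b\in\bO$; taking $\lambda=\mu$ in the definition of $H(\cdot,Dw)(\cdot)$ this yields $H(x,Dw)(x)\le\mu$ for every $x\in\Omega$, with $x\in\A(w)$ precisely when equality holds. Now fix $x_{0}\in\A(u)$ and apply Theorem \ref{sequence} to obtain $x_{\pm\infty}\in\pO$ with
\begin{equation*}
u(x_{+\infty})-u(x_{-\infty})=d_{\mu}(x_{-\infty},x_{+\infty})=d_{\mu}(x_{-\infty},x_{0})+d_{\mu}(x_{0},x_{+\infty}).
\end{equation*}

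Because $u=v=g$ on $\pO$, the left-hand side also equals $v(x_{+\infty})-v(x_{-\infty})$. Splitting it as $[v(x_{+\infty})-v(x_{0})]+[v(x_{0})-v(x_{-\infty})]$ and bounding the two brackets by $d_{\mu}(x_{0},x_{+\infty})$ and $d_{\mu}(x_{-\infty},x_{0})$ via the Lipschitz estimate for $v$, the displayed additivity forces equality in both, so in particular $v(x_{+\infty})-v(x_{0})=d_{\mu}(x_{0},x_{+\infty})$. I then construct a genuine $d_{\mu}$-geodesic $\Gamma$ from $x_{0}$ to $x_{+\infty}$ by concatenating the minimal curves joining the consecutive points $x_{n}\to x_{+\infty}$ of Proposition \ref{chain}. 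The quantitative bound there, $d_{\mu}(x_{n},x_{n+1})=\frac{\alpha^{2}}{2M}d(x_{n},\pO)$, together with \eqref{equivalenceintrinsicdistance} shows that every minimizing curve between $x_{n}$ and $x_{n+1}$ stays in the closed ball $\overline{B}(x_{n},\frac{\alpha}{2M}d(x_{n},\pO))\subset\subset\Omega$, so such a geodesic exists by Arzel\`a--Ascoli; since the total $d_{\mu}$-length is $\sum_{n}d_{\mu}(x_{n},x_{n+1})=d_{\mu}(x_{0},x_{+\infty})<\infty$, reparametrising $\Gamma$ by $d_{\mu}$-arclength makes it Lipschitz (via \eqref{equivalenceintrinsicdistance}) and continuous up to $x_{+\infty}$. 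For every $z\in\Gamma$ one has $d_{\mu}(x_{0},z)+d_{\mu}(z,x_{+\infty})=d_{\mu}(x_{0},x_{+\infty})$, and repeating the forcing argument gives $v(z)-v(x_{0})=d_{\mu}(x_{0},z)$ along the whole of $\Gamma$.

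To close $\A(u)\subset\A(v)$, I take points $z\in\Gamma$ with $z\to x_{0}$; by the arclength parametrisation they are interior, distinct from $x_{0}$, and eventually inside any $B(x_{0},r)$. By the strict monotonicity of $\lambda\mapsto d_{\lambda}$ following from \eqref{propD} (Remark \ref{propertiesofdistance}(vi)), for each $\lambda<\mu$ we get $v(z)-v(x_{0})=d_{\mu}(x_{0},z)>d_{\lambda}(x_{0},z)$, so the defining condition $v(\cdot)-v(x_{0})\le d_{\lambda}(x_{0},\cdot)$ on $B(x_{0},r)$ fails for every $\lambda<\mu$; hence $H(x_{0},Dv)(x_{0})\ge\mu$, and with the reverse inequality above $x_{0}\in\A(v)$. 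For the qualitative statement I run the same computation with $v=u$: forming the full geodesic $\Gamma$ from $x_{-\infty}$ to $x_{+\infty}$ through $x_{0}$ (its two halves concatenate into a minimal curve by the additivity of Theorem \ref{sequence}), one has $u(z)-u(x_{0})=d_{\mu}(x_{0},z)$ for all $z\in\Gamma$, and differencing this between nearby points of $\Gamma$ gives $u(z')-u(z)=d_{\mu}(z,z')$ for $z'$ beyond $z$, whence $H(z,Du)(z)=\mu$ and $z\in\A(u)$. Thus each such $\Gamma$ lies in $\A(u)$ while every point of $\A(u)$ sits on one of them, so $\A(u)$ is exactly the union of these minimal-length Lipschitz $d_{\mu}$-curves.

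The delicate point I expect is the construction of $\Gamma$ all the way to the boundary datum $x_{+\infty}$ with Lipschitz regularity: one must prevent the connecting segments from drifting to $\pO$ (ensured by Proposition \ref{chain}, which confines each segment to a ball compactly contained in $\Omega$) and must extend the infinite concatenation continuously, in fact Lipschitz-continuously, to $x_{+\infty}$ (ensured by reparametrising by the finite total $d_{\mu}$-length and using the equivalence \eqref{equivalenceintrinsicdistance} with the Euclidean metric). Everything else is a repeated application of the triangle inequality and of Theorems \ref{lambdacontinuity} and \ref{sequence}.
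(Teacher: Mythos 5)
Your argument is correct and follows the same overall architecture as the paper's proof: fix $x_0\in\A(u)$, invoke Theorem \ref{sequence} to produce $x_{\pm\infty}\in\pO$ with the additive splitting of $d_\mu$ through $x_0$, place $x_0$ on a Lipschitz $d_\mu$-minimizing curve $\Gamma$ joining $x_{-\infty}$ to $x_{+\infty}$, and extract the attainment of $\mu$ from the maximal growth rate along $\Gamma$. The one point where you genuinely diverge is the mechanism forcing an arbitrary minimizer $v$ to grow at the maximal rate on $\Gamma$: the paper shows $S^{-}_{\mu}(u,\Omega)=S^{+}_{\mu}(u,\Omega)=u$ along $\Gamma$ and uses the sandwich of Theorem \ref{optimalsolutionsprob} to conclude $v=u$ there, after which the slope argument for $u$ transfers to $v$; you instead observe that $v(x_{+\infty})-v(x_{-\infty})=u(x_{+\infty})-u(x_{-\infty})$ because both agree with $g$ on $\pO$, and squeeze the two Lipschitz bounds $v(x_{+\infty})-v(z)\le d_\mu(z,x_{+\infty})$ and $v(z)-v(x_{-\infty})\le d_\mu(x_{-\infty},z)$ against the additivity to force equality in each. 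The two computations are equivalent in substance (the paper's inequality $v\ge S^{-}_{\mu}$ is precisely your first Lipschitz bound rearranged), but yours is more self-contained, bypassing the representation formulas and the intermediate conclusion $v=u$ on $\Gamma$. You are also more explicit than the paper about why the concatenated geodesic exists, stays inside $\Omega$ segment by segment, and extends Lipschitz-continuously up to $x_{+\infty}$; the paper delegates this to Proposition \ref{a3} and the remark following it, even though its endpoints lie on $\pO$. The remaining steps --- the strict monotonicity of $\lambda\mapsto d_\lambda$ from property (D) to get $H(x_0,Dv)(x_0)\ge\mu$, the reverse inequality from Theorem \ref{lambdacontinuity}, and the differencing along $\Gamma$ to show $\Gamma\subset\A(u)$ --- coincide with the paper's.
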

\begin{proof} Let $x_ 0\in \A(u)$ then, by Theorem \ref{sequence}, there exist $x_{+\infty}, x_{-\infty}\in\partial \Omega$, such that
	\begin{equation*}
	u(x_{+\infty})-u(x_{-\infty})=d_{\mu}(x_{-\infty},x_{+\infty}).
	\end{equation*}
\[ u(x_{+\infty})-u(x_0)=d_{\mu}^{\Omega}(x_0,x_{+\infty}), \ \mbox{and}\ u(x_0)-u(x_{-\infty})=d_{\mu}^{\Omega}(x_{-\infty},x_0)\]
This implies that 
\[d_{\mu}(x_{-\infty},x_0)+ d_{\mu}(x_0,x_{+\infty})= d_{\mu}(x_{-\infty},x_{+\infty}).\]
Then $x_0$ belongs to a curve $\gamma$ of minimal length for $d_\mu^\Omega$ (obtained, for example, joining a curve of minimal length from $x_{-\infty}$ to $x_0$ and one 
from  $x_0$ to $x_{+\infty}$) which exists and it is Lipschitz regular by Prop. \ref{a3}.
We now prove that 
\[spt (\gamma) \subset \A (u) \ \mbox{and} \ spt (\gamma) \subset \A(v). \]
Assume that $\gamma: [-1,1] \to \bO$  and let $t\in [-1,1]$ let $s>t$ we have
\begin{equation} \label{maxslope}
u(\gamma(s))-u(\gamma(t))= d_\mu^\Omega (\gamma(t), \gamma(s)),
\end{equation}
then, since we can choose $s$ arbitrarily close to $t$,
\[H(\gamma(t), Du)(\gamma(t))\geq \mu, \]
and, since the other inequality holds everywhere, equality holds. A consequence of this proof is that if $u(\gamma(t))=v(\gamma(t)) $ for any $t\in [-1,1]$ then 
 \[H(\gamma(t), Dv)(\gamma(t))\geq \mu. \]
We now prove that for any $t$ 
\[ S^{-}_{\mu}(u,\Omega)(\gamma(t))=S^{+}_{\mu}(u,\Omega)(\gamma(t))=u(\gamma(t)),\]
and this, by Theorem \ref{optimalsolutionsprob} will imply that $u(\gamma(t))=v(\gamma(t)) $ for any $t\in [-1,1]$ for any other minimizer $v$. By definition of $S^{+}_{\mu}(u,\Omega)$ and $S^{-}_{\mu}(u,\Omega)$, for any $t\in [-1,1]$, it holds
\begin{equation}
\begin{split}
&S^{-}_{\mu}(u,\Omega)(\gamma(t))\ge u(x_{+\infty})-d(\gamma(t),x_{+\infty})= u(\gamma(t)) \quad \mbox{and}\\
&S^{+}_{\mu}(u,\Omega)(\gamma(t))\le u(x_{-\infty})+d(x_{-\infty},\gamma(t))= u(\gamma(t)),
\end{split}
\end{equation}
where the equality follows from the \eqref{maxslope}. The proof is concluded recalling that by Theorem \ref{optimalsolutionsprob}, 
\[S^{-}_{\mu}(u,\Omega)(x)\le u(x) \le S^{+}_{\mu}(u,\Omega)(x).\]
\end{proof}

\appendix
\section{Existence of geodesics}

\begin{prop}\label{trasversal}
	Let $x,y\in\Omega$, $\gamma\in\mathsf{path}(x,y)$ and $E$ such that $\Leb^{n}(E)=0$. Then for every $\eps>0$ there exists a curve $\gamma_{\eps}$ trasversal to $E$ (i.e. $\mathcal{H}^{1}(\gamma_{\eps}((0,1))\cap E)=0$) such that 
	\begin{equation*}
	||\gamma_{\eps}-\gamma||_{\Winf((0,1))}<\eps.
	\end{equation*}
\end{prop}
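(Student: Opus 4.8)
The plan is to perturb $\gamma$ by a translation modulated by a fixed scalar profile, so that the endpoints stay fixed, and then to select the translation vector by a Fubini argument so that the perturbed image meets $E$ in an $\mathcal{H}^1$-null set. Concretely, I would fix once and for all a Lipschitz function $\phi:[0,1]\to[0,1]$ with $\phi(0)=\phi(1)=0$ and $\phi(t)>0$ for every $t\in(0,1)$ (for instance $\phi(t)=\sin(\pi t)$), and for $v\in\R^n$ set
\begin{equation*}
\gamma_v(t):=\gamma(t)+\phi(t)\,v.
\end{equation*}
Each $\gamma_v$ is Lipschitz (a sum of the Lipschitz maps $\gamma$ and $\phi(\cdot)\,v$), satisfies $\gamma_v(0)=x$ and $\gamma_v(1)=y$, hence $\gamma_v\in\mathsf{path}(x,y)$, and obeys $\|\gamma_v-\gamma\|_{\infty}\le|v|$ and $\|\dot\gamma_v-\dot\gamma\|_{\infty}\le\|\phi'\|_{\infty}|v|$, so that $\|\gamma_v-\gamma\|_{\Winf((0,1))}\le(1+\|\phi'\|_{\infty})|v|$. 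This last quantity is $<\eps$ once $|v|$ is small, and since $\gamma([0,1])$ is a compact subset of the open set $\Omega$, one also has $\gamma_v([0,1])\subset\Omega$ for all sufficiently small $|v|$. It therefore only remains to find, in every neighborhood of the origin, a vector $v$ for which $\gamma_v$ is transversal to $E$.

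To this end I would first replace $E$ by a Borel (indeed $G_\delta$) null set $\widetilde E\supset E$ with $\Leb^{n}(\widetilde E)=0$; transversality to $\widetilde E$ implies transversality to $E$ because $\gamma_v((0,1))\cap E\subset\gamma_v((0,1))\cap\widetilde E$. I then consider the continuous map $\Phi(t,v):=\gamma(t)+\phi(t)\,v$ on $(0,1)\times\R^{n}$ and the Borel set $G:=\Phi^{-1}(\widetilde E)$. For each fixed $t\in(0,1)$ the affine map $v\mapsto\gamma(t)+\phi(t)\,v$ is a bijection of $\R^{n}$ with Jacobian $\phi(t)^{n}\neq0$, so the slice $\{v:(t,v)\in G\}=\phi(t)^{-1}\bigl(\widetilde E-\gamma(t)\bigr)$ has $\Leb^{n}$-measure $\phi(t)^{-n}\Leb^{n}(\widetilde E)=0$. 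By Fubini's theorem $(\Leb^{1}\times\Leb^{n})(G)=0$, hence for a.e.\ $v\in\R^{n}$ the complementary slice $A_v:=\{t\in(0,1):\gamma_v(t)\in\widetilde E\}$ satisfies $\Leb^{1}(A_v)=0$.

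For such a $v$ the image of $\gamma_v$ meets $\widetilde E$ only along $A_v$, namely $\gamma_v((0,1))\cap\widetilde E=\gamma_v(A_v)$, and since $\gamma_v$ is Lipschitz I would conclude
\begin{equation*}
\mathcal{H}^{1}\!\left(\gamma_v((0,1))\cap\widetilde E\right)=\mathcal{H}^{1}(\gamma_v(A_v))\le \mathrm{Lip}(\gamma_v)\,\mathcal{H}^{1}(A_v)=\mathrm{Lip}(\gamma_v)\,\Leb^{1}(A_v)=0,
\end{equation*}
so that $\gamma_v$ is transversal to $\widetilde E$, and hence to $E$. Because the set of such $v$ has full $\Leb^{n}$-measure, it is dense near $0$, and I may pick a single $v$ with $|v|$ so small that both $\|\gamma_v-\gamma\|_{\Winf((0,1))}<\eps$ and $\gamma_v([0,1])\subset\Omega$ hold; then $\gamma_\eps:=\gamma_v$ is the desired curve. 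The two genuinely delicate points, which I expect to be the main obstacles, are the measurability of $G$ (resolved by passing to the Borel hull $\widetilde E$, since a continuous preimage of a merely Lebesgue-measurable set need not be measurable) and the passage from the $\Leb^{1}$-smallness of the parameter set $A_v$ to the $\mathcal{H}^{1}$-smallness of its Lipschitz image; keeping the endpoints fixed forces the use of the vanishing profile $\phi$, whose zero set $\{0,1\}$ is $\Leb^{1}$-negligible and therefore does not disturb the Fubini slicing on $(0,1)$.
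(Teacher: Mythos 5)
Your proposal is correct and follows essentially the same route as the paper's own proof: perturb $\gamma$ to $\gamma_v(t)=\gamma(t)+\phi(t)v$ with a profile vanishing at the endpoints, apply a Fubini slicing argument to the set $\{(t,v):\gamma_v(t)\in E\}$ to find arbitrarily small $v$ with $\Leb^{1}$-null parameter slice, and conclude via the fact that a Lipschitz image of a $\Leb^{1}$-null set is $\mathcal{H}^{1}$-null. In fact your write-up is slightly more careful than the paper's (passing to a Borel hull of $E$ for measurability, requiring $\phi>0$ on $(0,1)$ so every interior slice is affinely null, and checking that $\gamma_v$ stays inside $\Omega$), so there is nothing to correct.
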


\begin{proof}
	Let $g(t)\in C^{1}[0,1]$ be a non negative function such that $g(0)=g(1)=0$. For every $v\in\mathbb{R}^{n}$, we define the curve $\gamma_{v}(t)=\gamma(t)+vg(t)$ and the function $F(t,v):=\gamma(t)+vg(t)$. Let $A$ be the set of the points $(t,v)\in [0,1]\times\mathbb{R}^{n}$ such that $F(t,v)\in E$ and $A_{t}:=\{v\in\mathbb{R}^{n}: (t,v)\in A\}$. $\Leb^{n}(A_{t})=0$ for every fixed $t\in[0,1]$, since $\Leb^{n}(E)=0$. Then $\Leb^{n+1}(A)=0$,too. This implies that $A_{v}:=\{t\in[0,1]: (t,v)\in A\}$ is such that $\Leb^{1}(A_{v})=0$ for a.e. $v\in\mathbb{R}^{n}$. Let $v$ such that $\Leb^1(A)=0$. Thanks to the lipschitzianity of $\gamma_{v}$ we then have that $\mathcal{H}^{1}(\gamma_{v}(A_{v}))=0$. The claim follows noticing that $\gamma_{v}(A_{v})=\gamma_{v}([0,1])\cap E$.\\
	Finally, taking $v$ such that $|v|<{\eps}/{||g(t)||_{\Winf([0,1])}}$, it holds
	\begin{equation*}
	||\gamma_{v}-\gamma||_{\Winf((0,1))}\le|v||g||_{\Winf((0,1))}<\eps.
	\end{equation*}
\end{proof}

\begin{prop}\label{intrinsicdistance}
	Let $\Omega$ be a connected open set of $\R^{d}$ and $\lambda\ge0$. The metric space $(\Omega, d_{\lambda})$ is a length space.
\end{prop}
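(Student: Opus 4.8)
The plan is to show that the intrinsic (metric) length induced by $\dlambda$ agrees, after taking infima, with the Finsler length appearing in Definition \ref{distance}, which is precisely the assertion that $(\Omega,\dlambda)$ is a length space. First I would record the two length functionals in play. For a curve $\gamma\in\path(x,y)$ write the \emph{Finsler length} $\ell_F(\gamma):=\int_0^1\Llambda(\gamma(t),\dot\gamma(t))\,dt$, so that $\dlambda(x,y)=\inf\{\ell_F(\gamma):\gamma\in\path(x,y)\}$ by Definition \ref{distance}, and the \emph{metric length}
$$\ell_d(\gamma):=\sup\left\{\sum_{i=1}^{n}\dlambda(\gamma(t_{i-1}),\gamma(t_i)):\ 0=t_0<\cdots<t_n=1\right\}.$$
Before comparing them I would note that $\dlambda$ is a genuine (possibly asymmetric) finite metric on $\Omega$: finiteness holds because $\Omega$ is open and connected, hence any two of its points are joined by a Lipschitz curve, and the bounds \eqref{equivalenceintrinsicdistance} then force $\dlambda(x,y)<+\infty$; the triangle inequality for interior intermediate points is Remark \ref{propertiesofdistance}(ii), which is all that will be used, since every sampling point $\gamma(t_i)$ lies in $\Omega$.

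The first (lower) inequality is immediate. For any continuous $\gamma\in\path(x,y)$ and any partition, iterating the triangle inequality of Remark \ref{propertiesofdistance}(ii) in the correct order gives
$$\sum_{i=1}^{n}\dlambda(\gamma(t_{i-1}),\gamma(t_i))\ge\dlambda(\gamma(0),\gamma(1))=\dlambda(x,y);$$
taking the supremum over partitions yields $\ell_d(\gamma)\ge\dlambda(x,y)$, and hence $\inf_\gamma\ell_d(\gamma)\ge\dlambda(x,y)$, where the infimum ranges over all continuous connecting curves.

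For the second (upper) inequality I would fix a Lipschitz $\gamma\in\path(x,y)$ and a partition $0=t_0<\cdots<t_n=1$. Each restriction $\gamma|_{[t_{i-1},t_i]}$, suitably reparametrised onto $[0,1]$, is an admissible competitor in the definition of $\dlambda(\gamma(t_{i-1}),\gamma(t_i))$, being Lipschitz, $\Omega$-valued, and carrying the correct endpoints; so by the parametrisation invariance of the Finsler length recorded in Remark \ref{propertiesofdistance}(iii),
$$\dlambda(\gamma(t_{i-1}),\gamma(t_i))\le\int_{t_{i-1}}^{t_i}\Llambda(\gamma(t),\dot\gamma(t))\,dt.$$
Summing over $i$ and using additivity of the integral gives $\sum_i\dlambda(\gamma(t_{i-1}),\gamma(t_i))\le\ell_F(\gamma)$, a bound uniform in the partition, whence $\ell_d(\gamma)\le\ell_F(\gamma)$. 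Taking the infimum over Lipschitz curves (a subclass of the continuous ones) yields $\inf_\gamma\ell_d(\gamma)\le\inf_\gamma\ell_F(\gamma)=\dlambda(x,y)$. Combining the two inequalities gives $\dlambda(x,y)=\inf_\gamma\ell_d(\gamma)$, i.e. $(\Omega,\dlambda)$ is a length space.

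I do not expect a genuine obstacle: the entire content is the comparison $\ell_d\le\ell_F$ on Lipschitz curves together with the trivial reverse estimate $\ell_d\ge\dlambda$ at the level of infima. The only points needing care are administrative, namely checking that the restriction of an admissible path is again admissible and invoking Remark \ref{propertiesofdistance}(iii) to absorb the reparametrisation, and keeping track of the asymmetry of $\dlambda$ so that the triangle inequality is always applied in the correct order and only at interior points (where Remark \ref{propertiesofdistance}(ii) guarantees it). If one wished to phrase the conclusion within the symmetric theory, one would pass to the symmetrised distance, but the argument above is insensitive to this.
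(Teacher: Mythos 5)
Your proof is correct and takes essentially the same route as the paper's: both establish $\inf_\gamma \ell_d(\gamma)\ge d_\lambda(x,y)$ by iterating the triangle inequality of Remark \ref{propertiesofdistance}(ii) over partition points (all interior, as you note), and both prove $\ell_d(\gamma)\le \int_0^1 L_\lambda(\gamma(t),\dot\gamma(t))\,dt$ by restricting $\gamma$ to the partition subintervals and absorbing the reparametrisation via the $1$-homogeneity of $L_\lambda$, which the paper computes explicitly where you cite Remark \ref{propertiesofdistance}(iii). If anything your write-up is marginally cleaner, since the paper's explicit computation tacitly assumes a uniform partition (the factor $\frac{1}{n}$), a harmless slip that your invocation of parametrisation invariance avoids.
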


\begin{proof}
	For all $x,y\in\Omega$, we define the intrinsic distance $d'_{\lambda}$ associated to $d_{\lambda}$, as follows
	\begin{equation*}
	d'_{\lambda}(x,y):=\inf\{\ell(\gamma):\gamma\in\mathsf{path}_{\Omega}(x,y)\},
	\end{equation*}
	where 
	\begin{equation*}
	\ell(\gamma):=\sup\left\{\sum_{i=0}^{n-1}d_{\lambda}(\gamma(t_{i}),\gamma(t_{i+1})):0=t_{0}<t_{1}<\dots<t_{n}=1\right\}.
	\end{equation*}
	Obviously, $d_{\lambda}\le d_{\lambda}'$. We want to prove that $d_{\lambda}'\le d_{\lambda}$.
	\\Let's take $\gamma\in\mathsf{path}_{\Omega}$, we show that 
	\begin{equation*}
	\ell(\gamma)\le\int_{0}^{1}L_{\lambda}(\gamma(t),\dot{\gamma}(t))dt.
	\end{equation*}
	Let $0=t_{0}<t_{1}<\dots<t_{n}=1$ be a partition of $[0,1]$ and let $\gamma_{i}\in\mathsf{path}_{\Omega}(\gamma(t_{i}),\gamma(t_{i+1}))$ be defined by
	\begin{equation*}
	\gamma_{i}(s):=\gamma(t_{i}+s(t_{i+1}-t_{i})), \quad s\in[0,1].
	\end{equation*}
	By definition of $d_{\lambda}$ we have 
	\begin{equation*}
	\sum_{i=0}^{n-1}d_{\lambda}(\gamma(t_{i}),\gamma(t_{i+1}))\le\sum_{i=0}^{n}\int_{0}^{1}L_{\lambda}(\gamma_{i}(s),\dot{\gamma_{i}}(s))dt.
	\end{equation*}
	If $s$ is a point of differentiability of $\gamma_{i}$, then 
	\begin{equation*}
	\dot{\gamma_{i}}(s)=\dot{\gamma}(t_{i}+s(t_{i+1}-t_{i}))(t_{i+1}-t_{i})=\frac{1}{n}\dot{\gamma}(t_{i}+s(t_{i+1}-t_{i}))
	\end{equation*}
	and, by $1$-homogeneity of $L_{\lambda}$
	\begin{align*}
	&\sum_{i=0}^{n}\int_{0}^{1}L_{\lambda}(\gamma_{i}(s),\dot{\gamma_{i}}(s))dt=\sum_{i=0}^{n}\frac{1}{n}\int_{0}^{1}L_{\lambda}(\gamma(t_{i}+s(t_{i+1}-t_{i})),\dot{\gamma}(t_{i}+s(t_{i+1}-t_{i})))dt\\
	&=\sum_{i=0}^{n}\int_{t_{i}}^{t_{i+1}}L_{\lambda}(\gamma(t),\dot{\gamma}(t))dt=\int_{0}^{1}L_{\lambda}(\gamma(t),\dot{\gamma}(t))dt.
	\end{align*}
\end{proof}

\begin{prop} \label{a3}
	Let $\Omega$ be a bounded and connected open set. For any two points $x,y\in\Omega$ there exists a Lipschitz, minimizing geodesic in $\path_{\bO}(x,y)$.
\end{prop}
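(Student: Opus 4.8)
The plan is to obtain the geodesic by the direct method of the calculus of variations applied to the length functional
\[
\mathcal{E}(\gamma):=\int_{0}^{1}L_{\lambda}(\gamma(t),\dot\gamma(t))\,dt,
\]
minimized over $\gamma\in\mathsf{path}_{\overline{\Omega}}(x,y)$. Since $\Omega$ is open, bounded and connected, $x$ and $y$ are joined inside $\Omega$ by a Lipschitz curve, so by \eqref{equivalenceintrinsicdistance} the infimum of $\mathcal{E}$ is finite. To make sense of $\mathcal{E}$ on curves that touch $\partial\Omega$, I would first extend $L_{\lambda}$ to $\overline{\Omega}\times\R^{d}$ keeping it a Finsler metric: defining $L_{\lambda}(x_{0},\cdot)$ for $x_{0}\in\partial\Omega$ as the support function of the (upper) limit of the convex sublevels $\{H(x,\cdot)\le\lambda\}$ as $x\to x_{0}$ preserves positive $1$-homogeneity and convexity in $q$, the bounds \eqref{boundsLlambda}, and the joint lower semicontinuity of Lemma \ref{lsc}.

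Next I would take a minimizing sequence $\gamma_{n}\in\mathsf{path}_{\overline{\Omega}}(x,y)$. Using the absolute $1$-homogeneity of $L_{\lambda}$ in $q$ (Remark \ref{propertiesofdistance}(iii)) together with $L_{\lambda}(x,0)=0$, I reparameterize each $\gamma_{n}$ proportionally to its Euclidean arc length: this leaves $\mathcal{E}(\gamma_{n})$ unchanged and gives constant speed $|\dot\gamma_{n}|\equiv\ell_{n}$ on $[0,1]$. The lower bound $L_{\lambda}\ge\alpha|\cdot|$ from \eqref{boundsLlambda} forces $\ell_{n}\le\alpha^{-1}\mathcal{E}(\gamma_{n})$, uniformly bounded along the minimizing sequence, so the $\gamma_{n}$ are equi-Lipschitz; being valued in the compact set $\overline{\Omega}$ they are also equibounded. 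By the Arzel\`a--Ascoli theorem a subsequence converges uniformly to a Lipschitz curve $\gamma$ with $\gamma(0)=x$, $\gamma(1)=y$ and $\gamma([0,1])\subset\overline{\Omega}$; passing to a further subsequence, $\dot\gamma_{n}\rightharpoonup\dot\gamma$ weakly-$*$ in $L^{\infty}((0,1),\R^{d})$. Hence $\gamma\in\mathsf{path}_{\overline{\Omega}}(x,y)$ is admissible. If $\ell_{n}\to0$ then $x=y$ and the statement is trivial.

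The decisive step is the sequential lower semicontinuity
\[
\mathcal{E}(\gamma)\le\liminf_{n\to\infty}\mathcal{E}(\gamma_{n}).
\]
For this I would invoke the classical semicontinuity theorem for integral functionals whose integrand is a nonnegative normal integrand that is lower semicontinuous in $(x,q)$ and convex in the velocity $q$: under uniform convergence of $\gamma_{n}$ together with weak convergence of $\dot\gamma_{n}$ such functionals are lower semicontinuous. The joint lower semicontinuity of the integrand is exactly Lemma \ref{lsc} (and the boundary extension above), while convexity in $q$ is Remark \ref{propofL}(i). Since $\mathcal{E}(\gamma_{n})$ tends to the infimum while $\gamma$ is an admissible competitor, this inequality forces $\gamma$ to attain the infimum; by the equivalence \eqref{equivalenceintrinsicdistance} with the pseudo-distance, $\gamma$ is a Lipschitz minimizing geodesic realizing $d_{\lambda}(x,y)$.

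I expect the main obstacle to be precisely this lower semicontinuity together with the boundary extension of $L_{\lambda}$: one must verify that the normal-integrand semicontinuity result applies to the extension of $L_{\lambda}$ to $\overline{\Omega}\times\R^{d}$, and that convexity in $q$ is not lost when passing to the boundary, which is why I would define the boundary values through support functions rather than a naive pointwise lower limit. The reparameterization subtleties (intervals of zero speed, non-strict monotonicity of the arc-length change of variables) are routine once one notes that $L_{\lambda}(x,0)=0$ makes such intervals contribute nothing to $\mathcal{E}$.
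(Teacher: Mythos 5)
Your strategy --- the direct method applied to the integral functional $\int_0^1 L_\lambda(\gamma(t),\dot\gamma(t))\,dt$, with Arzel\`a--Ascoli for compactness and an Ioffe-type semicontinuity theorem for nonnegative integrands that are jointly lower semicontinuous and convex in the velocity --- is genuinely different from the paper's proof of Proposition \ref{a3}, and is in fact much closer to Proposition \ref{existencecurve} later in the appendix. The paper proves Proposition \ref{a3} by minimizing the \emph{length functional}
\[
\ell(\gamma)=\sup\Bigl\{\sum_{i=0}^{n-1}d_\lambda\bigl(\gamma(t_i),\gamma(t_{i+1})\bigr) : 0=t_0<t_1<\dots<t_n=1\Bigr\}
\]
introduced in Proposition \ref{intrinsicdistance}, which identifies $\ell$ with the integral functional for curves in $\Omega$ (the length-space property). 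The advantage of that formulation is twofold: $\ell$ is a supremum of functions that are continuous under uniform convergence (Proposition \ref{continuitydxy}), hence automatically lower semicontinuous, so no semicontinuity theorem for integral functionals is required; and $\ell$ is already defined on all of $\mathsf{path}_{\overline\Omega}$ because $d_\lambda$ was extended to $\overline\Omega\times\overline\Omega$ by the liminf construction of Definition \ref{distance}, so no extension of the integrand to $\partial\Omega$ is needed. Your route buys an explicit integral representation of the minimizer, but at the price of the two technical points you yourself single out.

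Of those two points, the boundary extension of $L_\lambda$ is a genuine gap rather than a routine verification. The support function of the Kuratowski upper limit of the sublevels $\{H(x,\cdot)\le\lambda\}$ as $x\to x_0\in\partial\Omega$ is indeed convex and $1$-homogeneous in $q$, but it need not be jointly lower semicontinuous at $(x_0,q)$: the upper limit collects points realized along \emph{some} sequence $x_n\to x_0$, whereas lower semicontinuity requires an estimate along \emph{every} sequence, and if the sublevel sets oscillate as $x\to x_0$ the two disagree. Replacing it by a pointwise $\liminf$ (or by the support function of the lower Kuratowski limit) restores lower semicontinuity but can destroy convexity in $q$, which the Ioffe theorem needs at every point visited by the limit curve. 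Since $H$ is only assumed continuous on the open set $\Omega$, nothing in the hypotheses controls the sublevels at $\partial\Omega$, and a limit curve in $\mathsf{path}_{\overline\Omega}$ can spend a set of times of positive measure on $\partial\Omega$ (a slit domain is the typical case). To close the argument you would have to either impose extra regularity of $H$ up to the boundary, show the limit curve meets $\partial\Omega$ only on a null set, or switch to the length functional $\ell$ --- which is exactly what the paper does.
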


\begin{proof}
	First of all we notice that $\ell(\gamma)$ is lower semicontinuous w.r.t. the uniform convergence, indeed it is the sup of a family of continuous functions (indeed $d_{\lambda}$ is continuous thanks to Prop. \ref{continuitydxy}). Let 
	\[\ell:=d_\lambda(x,y):\inf\{\ell(\gamma):\gamma\in\mathsf{path}_{\Omega}(x,y)\}\]
	and consider a sequence of curves $(\gamma_n)$ such that $\ell_{n}$ converges to $\ell$. Without loss of generality we can assume that $\ell_{n}\le \ell+1$ for all $n$. We reparametrize the curves such that $\gamma_n:[0,1]\to\Omega$ such that $|\dot{\gamma_{n}}|=\ell_{n}\le\ell+1$. Since $\Omega$ is bounded we can apply Arzelà-Ascoli theorem and find a subsequence, that we will still call $(\gamma_{n})$, such that uniformly converges to a curve $\gamma$. Clearly $\path_{\bO} (x,y)$. Moreover
	\[\ell\le\ell(\gamma)\le\liminf\ell_{n}=\ell.\]
	So we have that 
	\[d_\lambda(x,y)=d'_{\lambda}(x,y)=\ell(\gamma):=\sup\left\{\sum_{i=0}^{n-1}d_{\lambda}(\gamma(t_{i}),\gamma(t_{i+1})):0=t_{0}<t_{1}<\dots<t_{n}=1\right\}. \]
\end{proof}
\begin{rmk}
	If $\pO$ is not regular and $y \in \pO$, \textit{a priori} it may happens that there are not Lipschitz curves connecting $x$ and $y$. However if we extend the definition as we did in \ref{distance}
	\begin{equation}
	d(x,y):=\inf\left\{\liminf_{n\rightarrow+\infty}d(x_{n},y_{n}) : (x_{n})_{n},(y_{n})_{n}\in \Omega^{\mathbb{N}} \ and \ x_{n}\rightarrow x,y_{n}\rightarrow y  \right\},
	\end{equation}
	if $d(x,y)<+\infty$ there exists a minimizing curve connecting the two points. To see this it is sufficient to consider $(x_{n}),(y_{n})$ two sequences converging respectively to $x$ and $y$ and such that $d(x,y)=\lim_{n\to+\infty}d(x_{n},y_{n})$. By Prop. \ref{a3}, for each $x_{n},y_{n}$ there exists a minimizing curve $\gamma_{n}$ and by similar arguments to the ones used in the proof of Prop. \ref{a3}, we find a Lipschitz curve $\gamma$, connecting $x$ to $y$, such that $d(x,y)=\ell(\gamma)$.
\end{rmk}

\section*{Acknowledgement}
The research of the two authors is partially financed  by the {\it ``Fondi di ricerca di ateneo, ex 60 $\%$''}  of the  University of Firenze and is part of the project  {\it "Alcuni problemi di trasporto ottimo ed applicazioni"}  of the  GNAMPA-INDAM.

\bibliographystyle{unsrt}
\nocite{*}


\end{document}